 \newcommand{\vc}[1]{{\boldsymbol #1}}
 \newcommand{\vpi}{\vc\pi}
\newcommand{\vone}{\vc 1}
\newcommand{\vxi}{\vc \xi}
\newcommand{\real}{\mathrm{Re}}
\newcommand{\ve}{\varepsilon}
\newcommand{\bs}{\boldsymbol}
\newcommand{\op}{\oplus}
\newcommand{\om}{\ominus}
\DeclareMathOperator*{\spec}{sp}
\DeclareMathOperator*{\diag}{diag}
\newtheorem{theorem}{Theorem}[section]
\newtheorem{assumption}[theorem]{Assumption}
\newtheorem{rem}[theorem]{Remark}
\newcommand{\qed}{\hfill $\square$}
\newenvironment{proof}{
      \noindent {\bf Proof }}{\qed
      \vspace{0.25\baselineskip}
}
\newcommand{\vligne}[1]{\begin{bmatrix} #1 \end{bmatrix}}
\newcommand{\dd}{\mathrm{d}}
\begin{document}

\title{Perturbation analysis of Markov modulated fluid models}
\author{
Sarah Dendievel\thanks{Ghent University, Department of
  Telecommunications and Information Processing, SMACS Research Group,
  Sint-Pietersnieuwstraat 41, B-9000 Gent, Belgium, \texttt{Sarah.Dendievel@UGent.be}}
\and 
Guy Latouche\thanks{Universit\'e libre de Bruxelles,  Facult\'e des sciences, CP212, Boulevard du Triomphe 2, 1050 Bruxelles,
  Belgium,        \texttt{latouche@ulb.ac.be}}
       }


\maketitle
\begin{abstract}
We consider perturbations of positive recurrent Markov modulated fluid models.
  In addition to the infinitesimal generator of the
  phases, we also perturb the rate matrix, and analyze the effect
  of those perturbations on   the matrix of first return   probabilities to the initial level.
   Our main contribution is the construction of a
  substitute for the matrix of first return probabilities, which
  enables us to analyze the effect of the perturbation under
  consideration. 

\noindent \underline{Keywords}: Markov modulated fluid models; Perturbation analysis; First return probabilities.
\end{abstract}

\section{Introduction}

Most mathematical models have input parameters that are typically estimated from the real world data.
Since the parameters in the modeled system represent quantities that can suffer from small errors,
it is natural to analyze how the performance measures are affected by small changes in the parameters.
Using the structural properties of the model, it becomes possible to assess the impact of perturbations on the key matrices of the underlying process by providing computationally feasible solutions along with probabilistic
interpretation.

Markov modulated fluid models appeared in the 1960s to study the
continuous-time behavior of queues and dams, an early paper being
Loynes \cite{loynes1962continuous}.  In the eighties, Markovian fluid
models started to be more extensively investigated, in particular
their stationary density, see for instance Rogers
\cite{rogers1994fluid} and Asmussen \cite{asmussen1995stationary}.
The importance of the matrix of first return probabilities has been
demonstrated in Ramaswami~\cite{ram99} and its computation has
attracted much attention, see Bean {\it et al.}
\cite{bean2005algorithms} and Bini {\it et al.}
\cite{bini2006solution}.  One may derive from $\Psi$, {\it the matrix of first return probabilities form above}, important
performance measures of the model, such as the stationary density of
the level of the fluid model.

The model $\{(X(t),\varphi(t)):t\in\mathbb{R}^{+}\}$ is described as
follows: $\varphi(t)$ is a Markov chain, with finite state space
$\mathcal{S}$, it is called the {\it{phase}} process; $ X(t)$ is a
continuous function, called the {\it{level}}.  The evolution of the
level is continuous and may be expressed as
\begin{align}
\nonumber
X(t)&= Y(t) + 
\sup_{0\leq s \leq t}\left\{\max\left(0,-Y(s)\right)\right\}
\\
\text{where} \quad 
Y(t)&=Y(0)+\int_{0}^{t}c_{\varphi(s)}\dd s,
\label{XofTfor_MMFM}
\end{align}
so that it varies linearly with rate
$c_{i}$ when $\varphi(t)=i$, $i\in \mathcal{S}$.
We partition $\mathcal{S}$ into $\mathcal{S}_{+}\cup \mathcal{S}_0 \cup \mathcal{S}_{-}$
with $\mathcal{S}_{+}=\{i\in \mathcal{S}:c_{i}>0\}$,
$\mathcal{S}_{0}=\{i\in \mathcal{S}:c_{i}=0\}$  and $\mathcal{S}_{-}=\{i\in \mathcal{S}:c_{i}<0\}$.
The infinitesimal generator of the phase process
is denoted by $A$ and is written,
possibly after permutation of rows and
columns,  as 
\begin{equation}
A=\left[\begin{array}{ccc}
A_{++} & A_{+0}& A_{+-}
\\
A_{0+} & A_{00}& A_{0-}
\\
A_{-+}  & A_{-0} & A_{--}
\end{array}\right],
\label{GeneratorA}
\end{equation}
and the {\it{rate matrix}} is denoted by
\begin{equation}\label{FluidRateMatrix}
C=
\left[\begin{array}{ccc}
C_{+} & &\\
&C_{0}&
\\
& & C_{-}
\end{array}\right]
\end{equation}
{with}
$C_+=  \mbox{diag}(c_i:i\in \mathcal{S}_{+} )$,
$C_-=  \mbox{diag}(c_i:i\in \mathcal{S}_{-} )$
and 
$C_0$
is a null matrix.
Throughout the paper, we make the following assumption.
\begin{assumption}
The Markov modulated fluid model is positive recurrent, that is,
$\boldsymbol{\vxi}C\vone <0$, where
$\vc \xi$ is the stationary probability vector defined for $i,j\in\mathcal{S}$ by
\begin{equation}\label{defdeXistatphase}
\xi_i=\lim_{t\rightarrow \infty}\mathbb{P}\left[\varphi(t)=i | \varphi\left(0\right)=j\right],
\end{equation}
and
is the unique solution of the equation
$\boldsymbol{\vxi}A=0$
such that
$\boldsymbol{\vxi1}=\boldsymbol{1}$,
where $\boldsymbol{1}$ denotes the column vector of 1's.
\end{assumption}

A key matrix for Markov modulated fluid models is the matrix $\Psi$ of
{\it first return probabilities to the initial level from above}, with dimensions
$|\mathcal{S}_{+}|\times |\mathcal{S}_{-}|$, and components
\begin{equation}\label{PsiBasis}
\Psi_{ij}=\mathbb{P}\left[\tau_{-}<\infty,\varphi\left(\tau_{-}\right)=j|Y\left(0\right)=0,\varphi\left(0\right)=i\right],
\end{equation}
where $\tau_{-}=\inf\{t>0:Y(t)<0\}$,
$i\in \mathcal{S}_{+}$
and
$j\in \mathcal{S}_{-}$.
By Rogers \cite[Theorem 1]{rogers1994fluid},
 $\Psi$ is the minimal nonnegative solution of the Riccati equation
\begin{equation}\label{RiccEquaNullph}
C_{+}^{-1}Q_{+-}+C_{+}^{-1}Q_{++}X+X\left|C_{-}^{-1}\right|Q_{--}+X\left|C_{-}^{-1}\right|Q_{-+}X=0,
\end{equation}
where
$|C_{-}^{-1}|$ denotes the entrywise absolute value of $C_{-}^{-1}$
and
\begin{align}\label{EqRefQ}
\left[\begin{array}{cc}
Q_{++} & Q_{+-}\\
Q_{-+} & Q_{--}
\end{array}\right]
=&\left[\begin{array}{cc}
A_{++} & A_{+-}\\
A_{-+} & A_{--}
\end{array}\right]
+\left[\begin{array}{c}
A_{+0}\\
A_{-0}
\end{array}\right]\left(-A_{00}^{-1}\right)\left[\begin{array}{cc}
A_{0+} & A_{0-}\end{array}\right].
\end{align}
Similarly, the {\it{matrix $\hat{\Psi}$ of first return probabilities to the initial level from below}} has components
\begin{equation*}
\hat{\Psi}_{ij}=\mathbb{P}\left[\tau_{+}<\infty,\varphi\left(\tau_{+}\right)=j|Y\left(0\right)=0,\varphi\left(0\right)=i\right],
\end{equation*}
where $\tau_{+}=\inf\{t>0:Y(t)>0\}$,
$i\in \mathcal{S}_{-}$
and
$j\in \mathcal{S}_{+}$, it satisfies a Riccati equation similar to 
\eqref{RiccEquaNullph}.
The present article focuses on the perturbation  analysis of $\Psi$
only, as the analysis for $\hat{\Psi}$ is similar.

Two other important matrices are 
\begin{align}
U&=|C_{-}^{-1}|Q_{--}+ |C_{-}^{-1}| Q_{-+} \Psi, \label{matrixU}
\\
K&=C_{+}^{-1}Q_{++}+\Psi |C_-^{-1}| Q_{-+}.   \label{matrixK}
\end{align}
The matrix $U$ is the infinitesimal generator of the process of downward record and is such that for $i,j\in\mathcal{S}_-$,
 $(e^{Ux})_{ij}$
is the probability that, starting from $(y,i)$, for any $y$,
the process reaches level $y-x$ in finite time and that $(y-x,j)$ is the first state visited in level $y-x$.
The matrix $K$ defined in \eqref{matrixK} is also an important matrix for Markov modulated fluid models and appears in the sationary density of the fluid model, see Section \ref{application}.

For a long time there has been a recurrent interest in perturbation analysis, see for instance
Cao and Chen \cite{cao1997perturbation},
Heidergott, {\it et al.} \cite{heidergott2010series},
Antunes {\it et al.}
\cite{antunes2006perturbation}. 
In this paper, we analyze the perturbation of Markov modulated fluid models.
When  the infinitesimal generator \eqref{GeneratorA}  of the phases is
perturbed into $A(\varepsilon) = A + \varepsilon \tilde{A} $, the
analysis follows the usual path:  the perturbed first return
probability matrix $\Psi(\ve)$ is  shown to be analytic, and
computable equations are readily obtained for the derivatives of $\Psi(\ve)$.  
We focus on the first order derivative
\[
\Psi^{(1)} = \frac{\dd \Psi (\varepsilon)}{\dd
  \varepsilon}\bigg|_{\varepsilon=0}
\]
of a perturbed Markov modulated fluid model as it provides a good
approximation of the effect of the perturbation on the system when
compared to the unperturbed system.  Furtermore, we are interested in
the structures and going beyond the first derivative is rather
computational and does not bring much more information.
 
We also analyze the effect on $\Psi$ of perturbations of the rate
matrix \eqref{FluidRateMatrix}.  When $C$ is perturbed as
$C(\varepsilon)=C+\varepsilon \tilde{C}$, phases of $\mathcal{S}_{0}$
may be transformed into phases of $\mathcal{S}_{+}$ or
$\mathcal{S}_{-}$ in the perturbed model, with the consequence that a
perturbation of the rates $c_i$ appearing in \eqref{XofTfor_MMFM} may
modify the structure of $\Psi(\varepsilon)$ as the dimensions are not
the same as those of $\Psi$.  Clearly, the comparison between the
matrices $\Psi(\varepsilon)$ and $\Psi$ requires more care.


We do not consider cases where both the generator $A$ and the rate matrix $C$ are perturbed, as our results show that this may be done, at the cost of increased complexity in the expressions obtained.


In Section \ref{AperturbedSection}, we analyze perturbations of the
infinitesimal generator of the phases.
In Section \ref{section3}, we analyze perturbations on the rate matrix
$C$ in four different cases.
In Section \ref{Case1} we assume that the phases of $\mathcal{S}_0$ are unaffected by the perturbation.
In Sections~\ref{AffectedCase+}--\ref{GenCaseMig} we examine what
happens when the phases of $\mathcal{S}_0$ are affected by the
perturbation. We propose an adapted version of $\Psi$ which enables
the analysis of the effect of the perturbation under consideration.
We decompose the analysis in three subsections for the sake of
clarity: firstly, we assume that all the phases in $\mathcal{S}_0$
become phases of $\mathcal{S}_+$ after perturbation, next, we assume
that they all become phases of $\mathcal{S}_-$ after perturbation,
finally, we assume that the phases in $\mathcal{S}_0$ are split
between $\mathcal{S}_+$ and $\mathcal{S}_-$.  The general approach is
the same in the three cases but the details differ and become much
more involved in the last.
As an application, we
derive in Section \ref{application} the first order approximation of
the stationary density of a perturbed fluid model.
In Section \ref{Illustr}, we provide a numerical illustration.


\section{Perturbation of the infinitesimal generator}\label{AperturbedSection}

In this section, the infinitesimal generator $A$ is perturbed and
becomes 
\begin{equation}\label{TransMatrPertFluid}
A(\varepsilon) = A + \varepsilon \tilde{A} ,
\end{equation}
where
\begin{equation}\label{defdepertubTtildFluid}
\tilde{A}  =
\begin{bmatrix}
\tilde{A}_{++}  & \tilde{A}_{+0} &  \tilde{A}_{+-}  \\ 
\tilde{A}_{0+}  & \tilde{A}_{00} &  \tilde{A}_{0-}  \\ 
\tilde{A}_{-+}  & \tilde{A}_{-0}  & \tilde{A}_{--} 
\end{bmatrix}, 
\end{equation}
$\widetilde{A} \vc 1 = 0$, and we assume that $A(\ve)$ is an
irreducible infinitesimal generator for $\varepsilon$ sufficiently
small in a neighborhood of~0.

The matrix $\Psi(\varepsilon)$ of first return probabilities for the perturbed model is the minimal nonnegative solution of the Riccati equation
\begin{align}
&C_{+}^{-1}Q_{+-}(\varepsilon)+C_{+}^{-1}Q_{++}(\varepsilon)X     
+X\left|C_{-}^{-1}\right|Q_{--}(\varepsilon)+X\left|C_{-}^{-1}\right|Q_{-+}(\varepsilon)X=0,
\label{RiccEquaNullphPert}
\end{align}
where $Q(\varepsilon)$ is defined by (\ref{EqRefQ}), with $A(\ve)$
replacing $A$.  We write 
\begin{align*}
& \left[\begin{array}{cc}
Q_{++}(\ve) & Q_{+-}(\ve)\\
Q_{-+}(\ve) & Q_{--}(\ve)
\end{array}\right]
 =\left[\begin{array}{cc}
Q_{++}+\ve\tilde{Q}_{++} & Q_{+-}+\ve\tilde{Q}_{+-}\\
Q_{-+}+\ve\tilde{Q}_{-+} & Q_{--}+\ve\tilde{Q}_{--}
\end{array}\right] +O(\ve^2).
\end{align*}

\begin{theorem}\label{ThmforAPerturbed}
The matrix $\Psi(\varepsilon)$ of first return probabilities, minimal nonnegative solution to \eqref{RiccEquaNullphPert},   for the perturbed model is analytic in a neighbourhood of zero. Furthermore, $\Psi^{(1)} $ is the unique solution of the Sylvester equation
\begin{align}
K X +X U 
=&
-C_+^{-1}\tilde{Q}_{+-}
-C_{+}^{-1}\tilde{Q}_{++}\Psi
-\Psi |C_{-}^{-1}|\tilde{Q}_{--}
- \Psi  |C_{-}^{-1}| \tilde{Q}_{-+} \Psi,
\label{SylvesterEqPsi1Genpert}
\end{align}
where 
$K$ and $U$
are defined in (\ref{matrixK}) and (\ref{matrixU}).
\end{theorem}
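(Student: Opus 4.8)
The plan is to proceed in two steps. First I would establish analyticity of $\Psi(\ve)$ by applying the analytic implicit function theorem to the Riccati equation \eqref{RiccEquaNullphPert}. Introduce
\[
F(\ve,X)=C_{+}^{-1}Q_{+-}(\ve)+C_{+}^{-1}Q_{++}(\ve)X+X|C_{-}^{-1}|Q_{--}(\ve)+X|C_{-}^{-1}|Q_{-+}(\ve)X .
\]
Since $-A_{00}^{-1}$ exists, $A_{00}(\ve)$ stays invertible for $\ve$ small, so $Q(\ve)$ is analytic near $0$ by \eqref{EqRefQ} and hence $F$ is jointly analytic in $(\ve,X)$ (quadratic in $X$), with $F(0,\Psi)=0$ by \eqref{RiccEquaNullph}. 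The partial Fr\'echet derivative of $F$ in $X$ at $(0,\Psi)$, applied to a direction $H$, is $C_{+}^{-1}Q_{++}H+H|C_{-}^{-1}|Q_{--}+H|C_{-}^{-1}|Q_{-+}\Psi+\Psi|C_{-}^{-1}|Q_{-+}H$, which by \eqref{matrixU} and \eqref{matrixK} equals $KH+HU$.

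It then remains to show the Sylvester operator $H\mapsto KH+HU$ is invertible, equivalently $\spec(K)\cap\spec(-U)=\emptyset$. Because $Q$ is the generator obtained by censoring the conservative generator $A$ on $\mathcal{S}_{+}\cup\mathcal{S}_{-}$, and because positive recurrence makes $\Psi$ stochastic, one checks $U\vone=|C_{-}^{-1}|(Q_{--}\vone+Q_{-+}\vone)=0$, so $U$ is a conservative generator and $\spec(-U)\subseteq\{\real\,z\ge 0\}$; on the other hand positive recurrence forces $K$ to be a stable matrix, $\spec(K)\subseteq\{\real\,z<0\}$, the classical fact underlying the exponential form of the stationary density (see \cite{ram99,rogers1994fluid}). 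The two spectra being disjoint, the operator is invertible and the analytic implicit function theorem yields a unique analytic branch $X(\ve)$ with $X(0)=\Psi$ solving $F(\ve,X(\ve))=0$. Finally, for $\ve$ near $0$ the perturbed model is still an irreducible positive recurrent fluid model (the drift $\vxi(\ve)C\vone$ is continuous in $\ve$ and strictly negative at $\ve=0$, as $C$ is unchanged), so its minimal nonnegative solution $\Psi(\ve)$ is well defined, depends continuously on $\ve$, and equals $\Psi$ at $\ve=0$; being a solution of $F=0$ close to $(0,\Psi)$ it must coincide with $X(\ve)$ for small $\ve$, which proves that $\Psi(\ve)$ is analytic.

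For the Sylvester equation, I would write $\Psi(\ve)=\Psi+\ve\,\Psi^{(1)}+O(\ve^{2})$ and $Q(\ve)=Q+\ve\,\tilde Q+O(\ve^{2})$, substitute into \eqref{RiccEquaNullphPert} and collect powers of $\ve$. The $\ve^{0}$ terms reproduce \eqref{RiccEquaNullph}. Among the $\ve^{1}$ terms, those involving $\Psi^{(1)}$ are $C_{+}^{-1}Q_{++}\Psi^{(1)}+\Psi|C_{-}^{-1}|Q_{-+}\Psi^{(1)}+\Psi^{(1)}|C_{-}^{-1}|Q_{--}+\Psi^{(1)}|C_{-}^{-1}|Q_{-+}\Psi$, which regroup as $K\Psi^{(1)}+\Psi^{(1)}U$ by \eqref{matrixK} and \eqref{matrixU}; the remaining $\ve^{1}$ terms are exactly minus the right-hand side of \eqref{SylvesterEqPsi1Genpert}. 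Rearranging gives \eqref{SylvesterEqPsi1Genpert}, and uniqueness of $\Psi^{(1)}$ follows at once from the invertibility of $H\mapsto KH+HU$ already established.

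I expect the main obstacle to be the spectral separation $\spec(K)\cap\spec(-U)=\emptyset$, i.e.\ recording carefully that positive recurrence forces $K$ to be strictly stable while $U$ is conservative. This single structural fact is what simultaneously legitimizes the implicit function theorem argument, the identification of the analytic branch with the minimal nonnegative solution, and the uniqueness of $\Psi^{(1)}$; the differentiation and the regrouping into $K$ and $U$ are otherwise routine.
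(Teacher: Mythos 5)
Your proposal is correct and follows essentially the same path as the paper: define the quadratic map $F(\ve,X)$ from the Riccati equation, observe that its partial Fr\'echet derivative in $X$ at $(0,\Psi)$ is the Sylvester operator $H\mapsto KH+HU$, invoke the spectral separation $\spec(K)\subset\{\real z<0\}$, $\spec(-U)\subset\{\real z\ge 0\}$ (the paper cites Rogers and Govorun \emph{et al.}) to apply the implicit function theorem, and then read off \eqref{SylvesterEqPsi1Genpert} from the $\ve^1$ coefficient. The only additions you make beyond the paper's argument are the explicit identification of the analytic branch with the minimal nonnegative solution $\Psi(\ve)$ and the sketch of why $U\vone=0$; these are welcome details but do not change the approach.
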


\begin{proof}
Define the continuous operator 
\begin{align*}
F(\varepsilon,\mathcal{X}) = \  &C_{+}^{-1}Q_{+-}(\varepsilon)
+C_{+}^{-1}Q_{++}(\varepsilon)\mathcal{X}
 +\mathcal{X}\left\vert C_{-}^{-1}\right\vert Q_{--}(\varepsilon)
+\mathcal{X}\left\vert C_{-}^{-1}\right\vert Q_{-+}(\varepsilon)\mathcal{X}.
\end{align*}
We have  $F(0,\Psi)=0$ and
${\partial}_{\mathcal{X}}F(\varepsilon,\mathcal{X})$ exists in a neighborhood of  $(0,\Psi)$ and is continuous  at $(0,\Psi)$.
For $Y,H\in \mathbb{R}^{|\mathcal{S}_+| \times |\mathcal{S}_-|}$, the equation
\begin{equation*}
\left.{\partial}_{\mathcal{X}}F(\varepsilon,\mathcal{X})\right|_{\varepsilon =0,\mathcal{X}=\Psi}(Y)
=H,
\end{equation*}
is equivalent to 
 the Sylvester equation
\begin{equation}\label{SylvEqPsiDiff}
KY
+ Y U
=H.
\end{equation}
From Rogers \cite{rogers1994fluid} and Govorun {\it{et al.}}
\cite{govorun2013stability}, we have
$\spec(K)\in\{z\in\mathbb{C}:\real({z}) < 0\}$ and
$\spec(-U)\in\{z\in\mathbb{C}:\real({z})\geq 0\}$.  Thus, $K$ and $-U$
have no common eigenvalue and, by Lancaser and Tismenetsky \cite[page
414]{lancaster1985m}, \eqref{SylvEqPsiDiff} has a unique solution, so
that 
$\left.{\partial}_{\mathcal{X}}F(\varepsilon,\mathcal{X})\right|_{\varepsilon
  =0,\mathcal{X}=\Psi(0)}$
is a nonsingular operator.  We conclude that $\Psi(\varepsilon)$ is
analytic at zero by the Implicit Function Theorem.  
%
\end{proof}

\begin{rem}\rm{
It immediately results from  Xue {\it{et al.}} \cite[Theorem
2.2]{xue2012accurate} that small {\em relative} changes to the entries
of $Q$ induce small {\em relative} differences between $\Psi$ and
$\Psi(\varepsilon)$.  The bounding coefficient matrix in \cite[Eqn. 
(2.12)]{xue2012accurate} is the solution of a Sylvester equation with the same
coefficients $K$ and $U$ as in (\ref{SylvesterEqPsi1Genpert}) and a
different right-hand side.}
\end{rem}

\section{Perturbation of the rate matrix}\label{section3}

Define
  \begin{equation}\label{Cperturbation}
    C(\varepsilon)=C+\varepsilon \tilde{C}
  \end{equation}
with $\tilde C = \diag (\tilde c_i : i \in \mathcal S)$, partitioned as
\begin{equation}
\tilde{C}=
\left[\begin{array}{ccc}
\tilde{C}_{+} & &\\
&\tilde{C}_{0}&
\\
& & \tilde{C}_{-}
\end{array}\right]
\end{equation} 
where the orders of $\tilde{C}_+$, $\tilde{C}_0$ and $\tilde{C}_-$ are
equal to those of $C_+$, $C_0$ and $C_-$, respectively.  Assume that
$\ve$ is small enough so that the diagonal elements of 
$C_{+}(\varepsilon)$ are strictly positive and those of  
$C_{-}(\varepsilon)$ strictly negative.

We analyze separately the cases $\tilde{C_0}=0$ (in Section
\ref{Case1}) and $\tilde{C_0}\neq0$.  If $\tilde{C_0}\neq0$, the
perturbation has the effect of changing null phases into non-null
phases.  To simplify the presentation, we suppose at first that all
phases of $\mathcal{S}_0$ become phases of the same non-null subset
$\mathcal{S}_+$ after perturbation.  This is analyzed in Section
\ref{AffectedCase+}.  In Section \ref{Casemigrto-}, we treat the case
where all the phases of $\mathcal{S}_0$ become phases of
$\mathcal{S}_-$ after perturbation.  Finally, we assume in Section
\ref{GenCaseMig}  that the
phases in $\mathcal{S}_0$ are split partially into $\mathcal{S}_+$ and
into $\mathcal{S}_-$.
 
Clearly, Section~\ref{GenCaseMig} covers the cases analyzed in
Sections~\ref{AffectedCase+} and \ref{Casemigrto-}.  It is useful,
nevertheless, to proceed through the special cases first, for which
the results are easier to follow. In various remarks, we emphasize the
unity of treatment.

The Implicit Function Theorem applies in all cases to prove the
 analyticity of $\Psi(\ve)$, although details become more involved as
 we proceed from the simplest  to the most general case. 
 We show this in Theorem \ref{Theorem32} and Theorem \ref{ThmFour}  and we omit the details for 
Theorem \ref{ThmFive}.

\subsection{Phases in $\mathcal{S}_0$ unaffected}\label{Case1}

Assume that  
 $\tilde{C}_{0}=0$  so that $C_0(\varepsilon)=0$ as well.
The matrix $\Psi(\varepsilon)$ of first return probabilities for the perturbed model is the minimal nonnegative solution of the Riccati equation
\begin{align}
&C_{+}^{-1}(\varepsilon)Q_{+-}+C_{+}^{-1}(\varepsilon)Q_{++}X
+X\left|C_{-}^{-1}(\varepsilon)\right|Q_{--}+X\left|C_{-}^{-1}(\varepsilon)\right|Q_{-+}X=0.\label{Ricceqfirstcase}
\end{align}
The next Theorem is proved by applying to  \eqref{Ricceqfirstcase} the same argument as in Theorem  \ref{ThmforAPerturbed}.

\begin{theorem}
Assume $C(\varepsilon)=C+\varepsilon \tilde{C}$, with $\tilde{C}_{0}=0$. The matrix $\Psi(\varepsilon)$ of first return probabilities for the perturbed model is analytic at zero and may be written as
\begin{equation*}
{\Psi}(\varepsilon)={\Psi}+\ve\Psi^{(1)}+O(\ve^2),
\end{equation*} 
where
$\Psi$ is the minimal non-negative solution to \eqref{RiccEquaNullph} and
 $\Psi^{(1)}$  is the unique solution of the Sylvester equation
\begin{equation}
K X +X U 
= 
-\Psi |C_{-}^{-1}| \tilde{C}_{-} U
-  C_{+}^{-1} \tilde{C}_{+} \Psi U,
\label{Psi1eqforNullNP}
\end{equation}
where 
$K$ and $U$ are defined in \eqref{matrixK} and \eqref{matrixU} respectively.
\hfill $\square$
\end{theorem}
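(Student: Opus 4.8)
The plan is to follow the scheme of the proof of Theorem~\ref{ThmforAPerturbed} verbatim, the only change being that the $\varepsilon$-dependence now lives in the diagonal matrices $C_{\pm}(\varepsilon)$ rather than in the blocks of $Q$. First I would introduce the operator
\begin{equation*}
G(\varepsilon,\mathcal X)=C_{+}^{-1}(\varepsilon)Q_{+-}+C_{+}^{-1}(\varepsilon)Q_{++}\mathcal X+\mathcal X|C_{-}^{-1}(\varepsilon)|Q_{--}+\mathcal X|C_{-}^{-1}(\varepsilon)|Q_{-+}\mathcal X,
\end{equation*}
so that $G(0,\Psi)=0$ by \eqref{RiccEquaNullph}. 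What makes the argument go through is the standing assumption that, for $\varepsilon$ near~$0$, the diagonal of $C_{+}(\varepsilon)$ stays strictly positive and that of $C_{-}(\varepsilon)$ stays strictly negative: each diagonal entry of $C_{+}^{-1}(\varepsilon)$ equals $(c_i+\varepsilon\tilde c_i)^{-1}$ and each entry of $|C_{-}^{-1}(\varepsilon)|$ equals $-(c_i+\varepsilon\tilde c_i)^{-1}$, both analytic in $\varepsilon$ on a neighbourhood of~$0$ since the denominators do not vanish there; in particular the absolute value introduces no non-smooth point, so $G$ is jointly analytic near $(0,\Psi)$.

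Next I would differentiate $G$ in its second argument. Exactly as in the proof of Theorem~\ref{ThmforAPerturbed}, and using the definitions \eqref{matrixK} and \eqref{matrixU} of $K$ and $U$, one gets $\left.{\partial}_{\mathcal X}G(\varepsilon,\mathcal X)\right|_{\varepsilon=0,\mathcal X=\Psi}(Y)=KY+YU$, i.e.\ the same Sylvester operator as before. Since $\spec(K)$ lies in the open left half-plane and $\spec(-U)$ in the closed right half-plane (Rogers~\cite{rogers1994fluid}, Govorun {\it et al.}~\cite{govorun2013stability}), $K$ and $-U$ have no common eigenvalue, hence by \cite[p.~414]{lancaster1985m} this operator is nonsingular. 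The Implicit Function Theorem then gives that $\Psi(\varepsilon)$ is analytic at~$0$, with $\Psi(\varepsilon)=\Psi+\varepsilon\Psi^{(1)}+O(\varepsilon^{2})$; moreover the equation governing $\Psi^{(1)}$ again carries the coefficients $K$ and $U$, which at once secures uniqueness of its solution.

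It then remains to identify the right-hand side. I would differentiate at $\varepsilon=0$ the perturbed Riccati equation \eqref{Ricceqfirstcase} with $\Psi(\varepsilon)$ in place of $X$, using the elementary derivatives $(C_{+}^{-1})^{(1)}=-C_{+}^{-1}\tilde C_{+}C_{+}^{-1}$ and $(|C_{-}^{-1}|)^{(1)}=|C_{-}^{-1}|\tilde C_{-}|C_{-}^{-1}|$ at $\varepsilon=0$. The terms carrying $\Psi^{(1)}$ collect into $K\Psi^{(1)}+\Psi^{(1)}U$, while the remaining terms are
\begin{equation*}
-C_{+}^{-1}\tilde C_{+}C_{+}^{-1}(Q_{+-}+Q_{++}\Psi)-\Psi|C_{-}^{-1}|\tilde C_{-}|C_{-}^{-1}|(Q_{--}+Q_{-+}\Psi).
\end{equation*}
The last move --- the only one that is not purely mechanical --- is to invoke the unperturbed Riccati equation \eqref{RiccEquaNullph} in the form $C_{+}^{-1}(Q_{+-}+Q_{++}\Psi)=-\Psi U$, together with $|C_{-}^{-1}|(Q_{--}+Q_{-+}\Psi)=U$ from \eqref{matrixU}, which turns these remaining terms into $-C_{+}^{-1}\tilde C_{+}\Psi U-\Psi|C_{-}^{-1}|\tilde C_{-}U$, precisely the right-hand side of \eqref{Psi1eqforNullNP}. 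I do not anticipate a genuine obstacle; the only points requiring care are the sign bookkeeping in the derivative of $|C_{-}^{-1}(\varepsilon)|$ --- which, unlike that of $C_{+}^{-1}(\varepsilon)$, carries no leading minus sign because of the absolute value --- and making sure each of the two Riccati identities is applied on the correct side of the matrix products.
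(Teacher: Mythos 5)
Your proposal follows exactly the route the paper intends (it explicitly defers to the argument of Theorem~\ref{ThmforAPerturbed}), and your final Sylvester equation matches \eqref{Psi1eqforNullNP}. One small internal inconsistency: having correctly written $(|C_{-}^{-1}|)^{(1)}=|C_{-}^{-1}|\tilde C_{-}|C_{-}^{-1}|$ (no leading minus), the ``remaining terms'' should read $-C_{+}^{-1}\tilde C_{+}C_{+}^{-1}(Q_{+-}+Q_{++}\Psi)+\Psi|C_{-}^{-1}|\tilde C_{-}|C_{-}^{-1}|(Q_{--}+Q_{-+}\Psi)$, with a plus on the second term; moving these to the right-hand side with an overall minus and then applying the two Riccati identities produces precisely the right-hand side of \eqref{Psi1eqforNullNP}, so the conclusion stands.
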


\subsection{Migration of $\mathcal{S}_0$ to $\mathcal{S}_+$}\label{AffectedCase+}
Assume that $\tilde c_i >0$ for all $i$ in $\mathcal{S}_0$,  this
means that all phases of $\mathcal{S}_0$ become phases of fluid
increase after perturbation. 
 To make this explicit in our equations,
we replace the subscript  0 by the subscript $\oplus$ and write $\mathcal S_\oplus$ instead of $\mathcal 
  S_0$, etc. 
The infinitesimal generator of the phase process is written as
\begin{equation}\label{MatrixPsibeforerPerturbPlus}
A= \left[\begin{array}{c|c|c}
A_{++} & A_{+\oplus} & A_{+-}\\
\hline A_{\oplus+} & A_{\oplus\oplus} & A_{\oplus-}\\
\hline A_{-+} & A_{-\oplus} & A_{--}
\end{array}\right].
\end{equation}
After perturbation, it is partitioned as
\begin{equation}\label{MatrixPsiAfterPerturbPlus}
A= \left[\begin{array}{cc|c}
A_{++} & A_{+\oplus} & A_{+-}\\
A_{\oplus+} & A_{\oplus\oplus} & A_{\oplus-}\\
\hline A_{-+} & A_{-\oplus} & A_{--}
\end{array}\right]
\end{equation}
and the set of phases with positive rates is
$\mathcal{S}_{+} \cup \mathcal{S}_{\oplus}$.  The dimensions of the
first return probability matrix become
$(|\mathcal{S}_{+}| + |\mathcal{S}_{\oplus}|) \times |\mathcal{S}_-|$
after perturbation and $\Psi$ may not be directly compared to
$\Psi(\ve)$, the matrix of first return probabilities of the perturbed
model, which is partitioned as
\begin{equation}\label{defPsiCasePlus}
\bs{\Psi}(\varepsilon)=\left[\begin{array}{c}
{\Psi}_{+-}(\varepsilon)\\
{\Psi}_{\oplus-}(\varepsilon)
\end{array}\right].
\end{equation}
The matrix $\vc\Psi(\ve)$ is the  minimal nonnegative solution of
the Riccati equation 
\begin{align}
& \left[\begin{array}{cc}
C_{+} +\varepsilon\tilde{C}_{+}\\
 & \varepsilon\tilde{C}_{\oplus}
\end{array}\right]^{-1}
\left(
\left[\begin{array}{c}
A_{+-}\\
A_{\oplus-}
\end{array}\right]
+
\left[\begin{array}{cc}
A_{++} & A_{+\oplus}\\
A_{\oplus+} & A_{\oplus\oplus}
\end{array}\right]X
\right)\nonumber\\
&+
X\left|C_{-}+\varepsilon\tilde{C}_{-}\right|^{-1}
\left(A_{--}
+
\left[\begin{array}{cc}
A_{-+} & A_{-\oplus}\end{array}\right]
X
\right)
=0.
\label{RiccEqPsiNullPlus}
\end{align}
As we show in the next theorem, comparisons are nevertheless possible,
as $\Psi$ is immediately recognised in
the limit $\overline{\Psi}=\lim_{\ve \rightarrow 0}\bs{\Psi}(\varepsilon)$.

\begin{theorem}\label{Theorem32}
  The matrix \eqref {defPsiCasePlus} of first return probabilities for
  the perturbed model, minimal nonnegative solution of
  \eqref{RiccEqPsiNullPlus}, is analytic near zero and may be written as
\[
\bs{\Psi}(\varepsilon)=\overline{\Psi}+\ve\Psi^{(1)}+O(\ve^2),
\]
where
\begin{align}\label{Psioverline}
\overline{\Psi}
&=
\left[\begin{array}{c}
{\Psi}\\
\Psi_{\op-}
\end{array}\right]
\text{\quad and \quad}
{\Psi}^{(1)}
=
\left[\begin{array}{c}
{\Psi}_{+-}^{(1)}\\
{\Psi}_{\oplus -}^{(1)}
\end{array}\right],
\end{align}
where
$\Psi$ is given in  \eqref{RiccEquaNullph},
$\Psi_{\op-}=(-A_{\oplus\oplus}^{-1})
(A_{\oplus-}
+
A_{\oplus+}\Psi)$,
${\Psi}_{+-}^{(1)}$ is  the unique solution of the Sylvester equation
\begin{equation}\label{eqforPsiprimefirstPert}
K X+X U 
= 
-\Psi |C_{-}^{-1}| \tilde{C}_{-} U
-  C_{+}^{-1} \tilde{C}_{+} \Psi U
- P_{\oplus} U,
\end{equation}
and
\begin{align}\label{eqforPsoplusdsunrondm}
{\Psi}_{\oplus-}^{(1)}
&=
(-A_{\oplus \oplus}^{-1})\tilde{C}_{\oplus}\Psi_{\oplus-}{U}
+(- A_{\oplus \oplus}^{-1}) A_{\oplus + } \Psi_{+-}^{(1)}.
\end{align}
The matrices
$K$ and $U$ are defined in \eqref{matrixK} and \eqref{matrixU},
and 
\begin{align*}
P_{\oplus}
 & =
K_{+\oplus}
(- A_{\oplus \oplus}^{-1})   \tilde{C}_{\oplus}(- A_{\oplus \oplus}^{-1}) 
(A_{\oplus-}
+
A_{\oplus+}\Psi)
\end{align*}
with $K_{+\oplus}={C_+^{-1}A_{+\oplus}}+\Psi |C_-^{-1}|A_{-\oplus}$.
\end{theorem}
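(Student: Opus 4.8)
The obstacle is that the Riccati equation \eqref{RiccEqPsiNullPlus} is \emph{singular} at $\varepsilon=0$: its leading inverse contains the block $(\varepsilon\tilde{C}_{\oplus})^{-1}$, which diverges as $\varepsilon\to 0$, so the Implicit Function Theorem cannot be applied to \eqref{RiccEqPsiNullPlus} directly, as it was to \eqref{RiccEquaNullphPert} in Theorem~\ref{ThmforAPerturbed}. The plan is to regularise first. Partition the unknown $\mathcal{X}$ conformally with \eqref{defPsiCasePlus} into an $\mathcal{S}_{+}$-block $\mathcal{X}_{+}$ over an $\mathcal{S}_{\oplus}$-block $\mathcal{X}_{\oplus}$, split \eqref{RiccEqPsiNullPlus} into its $\mathcal{S}_{+}$-row block and its $\mathcal{S}_{\oplus}$-row block, leave the first block untouched, and multiply the second one on the left by $\varepsilon\tilde{C}_{\oplus}$. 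This produces a map $F=(F_{+},F_{\oplus})$ with
\begin{align*}
F_{+}(\varepsilon,\mathcal{X})&=(C_{+}+\varepsilon\tilde{C}_{+})^{-1}(A_{+-}+A_{++}\mathcal{X}_{+}+A_{+\oplus}\mathcal{X}_{\oplus})+\mathcal{X}_{+}|C_{-}+\varepsilon\tilde{C}_{-}|^{-1}(A_{--}+A_{-+}\mathcal{X}_{+}+A_{-\oplus}\mathcal{X}_{\oplus}),\\
F_{\oplus}(\varepsilon,\mathcal{X})&=A_{\oplus-}+A_{\oplus+}\mathcal{X}_{+}+A_{\oplus\oplus}\mathcal{X}_{\oplus}+\varepsilon\tilde{C}_{\oplus}\mathcal{X}_{\oplus}|C_{-}+\varepsilon\tilde{C}_{-}|^{-1}(A_{--}+A_{-+}\mathcal{X}_{+}+A_{-\oplus}\mathcal{X}_{\oplus}),
\end{align*}
which is analytic on a full neighbourhood of $\varepsilon=0$ and, for $\varepsilon\neq 0$, has exactly the zeros of \eqref{RiccEqPsiNullPlus}.

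The second step identifies the limit $\overline{\Psi}$. At $\varepsilon=0$, $F_{\oplus}(0,\mathcal{X})=0$ forces $\mathcal{X}_{\oplus}=(-A_{\oplus\oplus}^{-1})(A_{\oplus-}+A_{\oplus+}\mathcal{X}_{+})$, the matrix $A_{\oplus\oplus}$ (a restriction of $A_{00}$) being nonsingular; substituting this into $F_{+}(0,\mathcal{X})=0$ and using the completion identities \eqref{EqRefQ}, which convert $A_{+-}+A_{++}\mathcal{X}_{+}+A_{+\oplus}\mathcal{X}_{\oplus}$ into $Q_{+-}+Q_{++}\mathcal{X}_{+}$ and $A_{--}+A_{-+}\mathcal{X}_{+}+A_{-\oplus}\mathcal{X}_{\oplus}$ into $Q_{--}+Q_{-+}\mathcal{X}_{+}$, reduces the equation to the unperturbed Riccati equation \eqref{RiccEquaNullph}. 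Taking its minimal nonnegative solution $\Psi$ yields $\mathcal{X}_{+}=\Psi$ and $\mathcal{X}_{\oplus}=\Psi_{\oplus-}$, as in \eqref{Psioverline}. That $\bs{\Psi}(\varepsilon)$ really tends to this branch is best seen probabilistically: as $\varepsilon\to 0^{+}$ a phase of $\mathcal{S}_{\oplus}$ accumulates a vanishing amount of fluid before the next phase change, so in the limit a first excursion started in $i\in\mathcal{S}_{\oplus}$ evolves inside $\mathcal{S}_{\oplus}$ until the phase leaves that set, going either directly to some $j\in\mathcal{S}_{-}$ (a first return) or to some $k\in\mathcal{S}_{+}$, after which the return has the law given by row $k$ of $\Psi$; this is exactly the censoring formula $\Psi_{\oplus-}=(-A_{\oplus\oplus}^{-1})(A_{\oplus-}+A_{\oplus+}\Psi)$. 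Alternatively one may invoke continuity of minimal nonnegative Riccati solutions.

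Now the Implicit Function Theorem applies to $F$ at $(0,\overline{\Psi})$. To see that $\partial_{\mathcal{X}}F(0,\overline{\Psi})$ is nonsingular, note that the linearisation of $F_{\oplus}$ is $(\delta_{+},\delta_{\oplus})\mapsto A_{\oplus+}\delta_{+}+A_{\oplus\oplus}\delta_{\oplus}$, so on zeros one eliminates $\delta_{\oplus}$ through $\delta_{\oplus}=(-A_{\oplus\oplus}^{-1})A_{\oplus+}\delta_{+}+(\text{given data})$; feeding this into the linearisation of $F_{+}$ and using $|C_{-}^{-1}|(A_{--}+A_{-+}\Psi+A_{-\oplus}\Psi_{\oplus-})=U$ together with the identity $C_{+}^{-1}A_{++}+\Psi|C_{-}^{-1}|A_{-+}+K_{+\oplus}(-A_{\oplus\oplus}^{-1})A_{\oplus+}=K$ (a consequence of \eqref{EqRefQ} and of the definition of $K_{+\oplus}$) collapses the linear system to a Sylvester equation $K\delta_{+}+\delta_{+}U=(\text{data})$, after which $\delta_{\oplus}$ is explicit. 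Since $\spec(K)\subset\{\real z<0\}$ and $\spec(-U)\subset\{\real z\geq 0\}$ (Rogers \cite{rogers1994fluid}, Govorun {\it et al.} \cite{govorun2013stability}), $K$ and $-U$ share no eigenvalue, so that Sylvester equation is uniquely solvable and $\partial_{\mathcal{X}}F(0,\overline{\Psi})$ is a nonsingular operator. The Implicit Function Theorem then gives a unique analytic branch $\mathcal{X}(\varepsilon)$ through $(0,\overline{\Psi})$, which coincides with $\bs{\Psi}(\varepsilon)$ near zero by the previous paragraph; hence $\bs{\Psi}(\varepsilon)=\overline{\Psi}+\varepsilon\Psi^{(1)}+O(\varepsilon^{2})$, with $\Psi^{(1)}$ partitioned as in \eqref{Psioverline}.

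Finally, differentiate $F(\varepsilon,\bs{\Psi}(\varepsilon))=0$ at $\varepsilon=0$. The $\mathcal{S}_{\oplus}$-component gives $A_{\oplus+}\Psi^{(1)}_{+-}+A_{\oplus\oplus}\Psi^{(1)}_{\oplus-}+\tilde{C}_{\oplus}\Psi_{\oplus-}U=0$, which rearranges into \eqref{eqforPsoplusdsunrondm}. The $\mathcal{S}_{+}$-component, using $\left.\frac{\dd}{\dd\varepsilon}(C_{+}+\varepsilon\tilde{C}_{+})^{-1}\right|_{\varepsilon=0}=-C_{+}^{-1}\tilde{C}_{+}C_{+}^{-1}$, $\left.\frac{\dd}{\dd\varepsilon}|C_{-}+\varepsilon\tilde{C}_{-}|^{-1}\right|_{\varepsilon=0}=|C_{-}^{-1}|\tilde{C}_{-}|C_{-}^{-1}|$, the relation $C_{+}^{-1}(Q_{+-}+Q_{++}\Psi)=-\Psi U$ drawn from \eqref{RiccEquaNullph}, then substituting the expression just found for $\Psi^{(1)}_{\oplus-}$ and applying the same collapsing identities, yields the Sylvester equation \eqref{eqforPsiprimefirstPert}; the extra term $-P_{\oplus}U$ appears precisely because $K_{+\oplus}(-A_{\oplus\oplus}^{-1})\tilde{C}_{\oplus}\Psi_{\oplus-}=P_{\oplus}$, and uniqueness of $\Psi^{(1)}_{+-}$ is once more the spectral separation of $K$ and $-U$. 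I expect the genuinely delicate points to be the regularisation — choosing what to multiply by so that the rescaled equation stays regular and nondegenerate at $\varepsilon=0$ — and identifying $\overline{\Psi}$ as the correct branch among the nonnegative solutions of \eqref{RiccEquaNullph}; everything afterwards is careful bookkeeping of the $Q$-completion identities.
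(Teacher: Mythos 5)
Your proof is correct and follows essentially the same route as the paper's: regularise the singular Riccati equation by rescaling the $\mathcal{S}_{\oplus}$-row block (you pre-multiply by $\varepsilon\tilde{C}_{\oplus}$ where the paper uses $\varepsilon I$, an inessential normalisation), identify $\overline{\Psi}$ from the $\varepsilon=0$ equations via the censoring identity $Q=A_{\pm\pm}+A_{\pm 0}(-A_{00}^{-1})A_{0\pm}$, prove analyticity by verifying that $\partial_{\mathcal{X}}F(0,\overline{\Psi})$ collapses to a nonsingular Sylvester operator with coefficients $K$ and $U$, and extract \eqref{eqforPsoplusdsunrondm} and \eqref{eqforPsiprimefirstPert} from the $\varepsilon^{1}$ coefficients. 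All the key identities you invoke ($U=|C_{-}^{-1}|(A_{--}+A_{-+}\Psi+A_{-\oplus}\Psi_{\oplus-})$, $C_{+}^{-1}(A_{+-}+A_{++}\Psi+A_{+\oplus}\Psi_{\oplus-})=-\Psi U$, and $K_{++}+K_{+\oplus}(-A_{\oplus\oplus}^{-1})A_{\oplus+}=K$) are the ones the paper uses, so the bookkeeping checks out.
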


\begin{proof}
To remove the effect of $\ve^{-1}$ in the left-most coefficient of
\eqref{RiccEqPsiNullPlus}, 
we pre-multiply both sides by $\diag(I,\ve I)$.
%
%
%
%
For
$\mathcal{X}=
\left[\begin{array}{c}
\mathcal{X}_{+-}
\\
\mathcal{X}_{\op-}
\end{array}\right]
$,  we define the operator
\begin{align*}
F\left(\varepsilon,\mathcal{X}
\right)
= &
\left[\begin{array}{c}
(C_{+}+\varepsilon\tilde{C}_{+})^{-1}
(A_{+-} + A_{++}\mathcal{X}_{+-}+A_{+\op}\mathcal{X}_{\op-})\\
\tilde{C}_{\oplus}^{-1}
(A_{\oplus-} + A_{\op+}\mathcal{X}_{+-} + A_{\op\op} \mathcal{X}_{\op-})
\end{array}\right]
\\
& +\left[\begin{array}{c}
\mathcal{X}_{+-}
\\
\varepsilon \mathcal{X}_{\oplus-}
\end{array}\right]
|C_{-}+\varepsilon\tilde{C}_{-}|^{-1}
(
A_{--}
+A_{-+}\mathcal{X}_{+-}
+ A_{-\oplus}\mathcal{X}_{\oplus-}
).
\end{align*}
 The equation 
\begin{equation*}
\left.{\partial}_{\mathcal{X}}F(\varepsilon,\mathcal{X})\right|_{\varepsilon =0,\mathcal{X}=\overline{\Psi}}
\left[\begin{array}{c}
Y_{+-}\\
Y_{\oplus-}
\end{array}\right]
=\left[\begin{array}{c}
H_{+-}\\
H_{\oplus-}
\end{array}\right]
\end{equation*}
is equivalent to the set of equations
\begin{align*}
Y_{+-}U+KY_{+-} & =  H_{+-}+K_{+\oplus}(-A_{\oplus\oplus}^{-1})\tilde{C}_{\oplus}H_{\oplus-},\\
Y_{\oplus-} & =  A_{\oplus\oplus}^{-1}\tilde{C}_{\oplus}H_{\oplus-}+(-A_{\oplus\oplus}^{-1})A_{\oplus-}Y_{+-}.
\end{align*}
This is a non-singular system, so that $\bs{\Psi}(\varepsilon)$ is analytic,
by the Implicit Function Theorem.
 From \eqref{RiccEqPsiNullPlus}, we obtain the two equations:
 \begin{align}
 &
{\Psi}_{+-}(\varepsilon)\vert C_{-}+\varepsilon\tilde{C}_{-}\vert^{-1}
\left(
A_{--}
+
A_{-+}{\Psi}_{+-}(\varepsilon) 
+ A_{-\oplus}
{\Psi}_{\oplus-}(\varepsilon)
\right)
\nonumber
\\
&+ 
(C_{+} +\varepsilon\tilde{C}_{+})^{-1}
\left(
A_{+-}
+
A_{++}{\Psi}_{+-}(\varepsilon)
+
A_{+\oplus}{\Psi}_{\oplus-}(\varepsilon)
\right)
 =  0,
 \label{eqtodeterminPsiplusrondmoins01}
 \\
 \intertext{and}
  &
  \varepsilon
{\Psi}_{\oplus-}(\varepsilon)
\vert C_{-}+\varepsilon\tilde{C}_{-}\vert^{-1}
\left(
A_{--}
+
A_{-+}{\Psi}_{+-}(\varepsilon) 
+ A_{-\oplus}
{\Psi}_{\oplus-}(\varepsilon)
\right)
\nonumber
\\
&+ 
\tilde{C}_{\oplus}^{-1}
\left(
A_{\oplus-}
+
A_{\oplus+}{\Psi}_{+-}(\varepsilon)
+
A_{\oplus\oplus}{\Psi}_{\oplus-}(\varepsilon)
\right)
 =  0,
 \label{eqtodeterminPsiplusrondmoins}
 \end{align}
 in which we take the limit for $\varepsilon\rightarrow0$. The second
equation gives
\begin{equation}
   \label{e:plusmoins}
{\Psi}_{\oplus-}(0)
=(-A_{\oplus\oplus})^{-1}
\left(
A_{\oplus-}+A_{\oplus+}{\Psi}_{+-}(0)
\right)
\end{equation}
and the first equation gives ${\Psi}_{+-}(0)$ as the
solution of \eqref{RiccEquaNullph}, so that ${\Psi}_{+-}(0)=\Psi.$
This proves \eqref{Psioverline}.

Taking the coefficients of $\ve$ in \eqref{eqtodeterminPsiplusrondmoins} and using \eqref{e:plusmoins}
 leads directly to \eqref{eqforPsoplusdsunrondm}.
To prove \eqref{eqforPsiprimefirstPert}, we note that
$\lim_{\ve\rightarrow 0}U(\ve)=U$ so that,  taking in 
(\ref{eqtodeterminPsiplusrondmoins01})
the limit for
$\varepsilon\rightarrow0$ and using (\ref{Psioverline}), 
we
obtain 
\begin{equation} \label{relationPsiUforoplus}
-\Psi U = C_+^{-1}(A_{+-}+A_{++}\Psi+A_{+\oplus}\Psi_{\oplus-}).
\end{equation}
We take the coefficient  of $\ve$ in \eqref{eqtodeterminPsiplusrondmoins01} and we use \eqref{relationPsiUforoplus} to obtain
\begin{equation*}
K_{++} \Psi^{(1)}_{+-}
+K_{+\oplus} \Psi^{(1)}_{\oplus-}
 +\Psi^{(1)}_{+-} U 
= 
-\Psi |C_{-}^{-1}| \tilde{C}_{-} U
-  C_{+}^{-1} \tilde{C}_{+} \Psi U
\end{equation*}
with
$K_{++}={C_+^{-1}A_{++}}+\Psi |C_-^{-1}|A_{-+}$.
Using \eqref{eqforPsoplusdsunrondm} and \eqref{matrixK} gives then \eqref{eqforPsiprimefirstPert}.
\end{proof}

\begin{rem}\rm{
  The components of the block $\Psi$ in $\overline{\Psi}$ are those
  defined in \eqref{PsiBasis}, for which one has a clear
  interpretation.  The components of the second block have a
  probabilistic interpretation as well: the $ij$th entry, for $i \in
  \mathcal{S}_{\oplus}$ and $j\in \mathcal{S}_{-}$, is the sum of
\begin{itemize}
\item 
$[(-A_{\oplus\oplus}^{-1})
A_{\oplus-}]_{ij}$,
the probability  that the phase process eventually goes from phase $i$ to phase $j$, after some time
spent in  $S_{\oplus}$  and
\item 
$[(-A_{\oplus\oplus}^{-1})
A_{\oplus+}\Psi]_{ij}$,
the  probability that the phase process leaves
  $\mathcal{S}_{\oplus}$  for a phase in $\mathcal{S}_+$ and
later returns to the initial level in phase $j$.
\end{itemize}
}
\end{rem}

\begin{rem}\rm{
The Sylvester equations \eqref{Psi1eqforNullNP} and 
\eqref{eqforPsiprimefirstPert} for $\Psi^{(1)}_{+-}$ are nearly
identical.  The only difference is the last term in the right-hand
side of  \eqref{eqforPsiprimefirstPert}, reflecting the migration of
all phases of $\mathcal{S}_0$ to phases of fluid increase.
}
\end{rem}


\subsection{Migration of $\mathcal{S}_0$ to $\mathcal{S}_-$}\label{Casemigrto-}

Assume that $\tilde c_i < 0$ for all $i$ in $\mathcal S_0$, so that
all the phases of $\mathcal{S}_{0}$ become phases of $\mathcal{S}_{-}$
after perturbation.  The set of such phases is written
$\mathcal{S}_{\ominus}$ and the infinitesimal generator of the phases
is written as
\[
A=\left[\begin{array}{ccc}
A_{++} & A_{+\ominus} & A_{+-}\\
A_{\ominus+} & A_{\ominus\ominus} & A_{\ominus-}\\
A_{-+} & A_{-\ominus} & A_{--}
\end{array}\right].
\]
The matrix of first return probabilities of the perturbed model is partitioned as 
\[
\bs{\Psi}(\varepsilon)=\left[\begin{array}{cc}
\boldsymbol{\Psi}_{+\ominus}(\varepsilon) & \boldsymbol{\Psi}_{+-}(\varepsilon)\end{array}\right],
\] 
and it is the minimal nonnegative solution of a  Riccati equation
which we rewrite as the two equations 
\begin{align}
 &(C_{+} + \varepsilon \tilde{C}_{+})^{-1}
(A_{+\ominus}+A_{++}{\Psi}_{+\ominus}(\varepsilon))
+
{\Psi}_{+\ominus}(\varepsilon)
\vert \varepsilon\tilde{C}_{\ominus}\vert^{-1}
(A_{\ominus\ominus}+A_{\ominus+}
{\Psi}_{+\ominus}(\varepsilon))
\nonumber
\\
&+
{\Psi}_{+-}(\varepsilon)
\vert C_{-}+\varepsilon\tilde{C}_{-}\vert^{-1}
(A_{-\ominus} + A_{-+}{\Psi}_{+\ominus}(\varepsilon))
=0, \quad 
\label{EqPsiPMoinsdsunRond}
\\
& (C_{+} + \varepsilon \tilde{C}_{+})^{-1}
(A_{+-}+A_{++}{\Psi}_{+-}(\varepsilon))
\nonumber
+
{\Psi}_{+\ominus}(\varepsilon)
\vert \varepsilon\tilde{C}_{\ominus}\vert^{-1}
(A_{\ominus-}+A_{\ominus+}
{\Psi}_{+-}(\varepsilon))
\nonumber
\\
& +
{\Psi}_{+-}(\varepsilon)
\vert C_{-}+\varepsilon\tilde{C}_{-}\vert^{-1}
(A_{--} + A_{-+}{\Psi}_{+-}(\varepsilon))
=0.
\label{EqPsiPMoins}
 \end{align}

\begin{theorem}\label{ThmFour}
The matrix $\bs{\Psi}(\varepsilon)$ of first return probabilities, minimal nonnegative solution to (\ref{EqPsiPMoinsdsunRond}) and (\ref{EqPsiPMoins})
 is near zero and may be written as
\begin{equation}
\bs{\Psi}(\varepsilon)=\overline{\Psi}+\ve\Psi^{(1)}+O(\ve^2),
\end{equation} 
where
\begin{align}\label{RefPsi0VersMinus}
\overline{\Psi}
&=\left[\begin{array}{cc}
0 & \Psi\end{array}\right],
\\
\Psi^{(1)}&=\left[\begin{array}{cc}
{\Psi}_{+\ominus}^{(1)}& {\Psi}_{+-}^{(1)}\end{array}\right].
\end{align}
The matrix  $\Psi$ is given in  \eqref{RiccEquaNullph},
${\Psi}_{+-}^{(1)}$ is  the unique solution of the Sylvester equation
\begin{align}\label{Sylveqforzerotominuscase}
K X +X U 
= 
-\Psi |C_{-}^{-1}| \tilde{C}_{-} U
-  C_{+}^{-1} \tilde{C}_{+} \Psi U
- KP_{\ominus}
\end{align}
and 
\begin{align}\label{eqforPsiplusminusronone}
\Psi^{(1)}_{+\ominus}
&=
(C_+^{-1} A_{+\ominus} + \Psi |C_-^{-1}| A_{-\ominus})\left(-A_{\ominus \ominus}^{-1}\right)
|\tilde{C}_{\ominus}|,
\end{align}
the matrices  $K$ and $U$ are defined in \eqref{matrixK} and \eqref{matrixU} 
and 
\begin{align*}
 P_{\ominus}
 & =
 \Psi^{(1)}_{+\ominus}
\left(-A_{\ominus \ominus}^{-1}\right)
(A_{\ominus-}+A_{\ominus+}\Psi).
\end{align*}
\end{theorem}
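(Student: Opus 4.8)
The plan is to mirror the structure of the proof of Theorem~\ref{Theorem32}, making the obvious notational substitutions ($\oplus\to\ominus$, and the $\varepsilon^{-1}$ singularity now sitting in the $\ominus$-block that multiplies $\Psi$ from the \emph{right} rather than from the left). First I would set up the operator $F(\varepsilon,\mathcal X)$ on $\mathcal X = [\mathcal X_{+\ominus}\ \ \mathcal X_{+-}]$ whose zeros are the solutions of \eqref{EqPsiPMoinsdsunRond}--\eqref{EqPsiPMoins}, after clearing the $\varepsilon^{-1}$ in the $|\varepsilon\tilde C_\ominus|^{-1}$ terms by post-multiplying the relevant block by $\diag(\varepsilon I, I)$ on the right (the analogue of pre-multiplying by $\diag(I,\varepsilon I)$ in Theorem~\ref{Theorem32}). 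One checks $F(0,\overline\Psi)=0$ with $\overline\Psi = [0\ \ \Psi]$: taking $\varepsilon\to 0$ in \eqref{EqPsiPMoinsdsunRond} the term $\Psi_{+\ominus}(\varepsilon)|\varepsilon\tilde C_\ominus|^{-1}(\cdots)$ forces $\Psi_{+\ominus}(0)=0$ (since $A_{\ominus\ominus}$ is nonsingular and the bracket tends to $A_{\ominus\ominus}$), and then \eqref{EqPsiPMoins} reduces to the Riccati equation \eqref{RiccEquaNullph}, giving $\Psi_{+-}(0)=\Psi$. This establishes \eqref{RefPsi0VersMinus}.

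Next I would compute the Fr\'echet derivative $\partial_{\mathcal X}F(\varepsilon,\mathcal X)|_{\varepsilon=0,\mathcal X=\overline\Psi}$ applied to a direction $[Y_{+\ominus}\ \ Y_{+-}]$ and show that the equation $\partial_{\mathcal X}F(\cdots)[Y]=[H_{+\ominus}\ \ H_{+-}]$ is equivalent to a triangular system: an equation expressing $Y_{+\ominus}$ algebraically in terms of $H_{+\ominus}$ (coming from the $\ominus$-block, where the leading term is $|\tilde C_\ominus|^{-1}Y_{+\ominus}A_{\ominus\ominus}$-type, invertible because $A_{\ominus\ominus}$ is nonsingular), together with a Sylvester equation $KY_{+-}+Y_{+-}U = H_{+-} + (\text{term in }Y_{+\ominus})$. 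Since $K$ and $-U$ have disjoint spectra (Rogers, Govorun \emph{et al.}, as quoted in the proof of Theorem~\ref{ThmforAPerturbed}), this Sylvester equation has a unique solution, so the whole operator is nonsingular and the Implicit Function Theorem yields analyticity of $\bs\Psi(\varepsilon)$ near zero.

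Finally I would extract the first-order coefficients. Writing $\Psi_{+\ominus}(\varepsilon)=\varepsilon\Psi_{+\ominus}^{(1)}+O(\varepsilon^2)$ (its value at $0$ being $0$) and $\Psi_{+-}(\varepsilon)=\Psi+\varepsilon\Psi_{+-}^{(1)}+O(\varepsilon^2)$, I substitute into \eqref{EqPsiPMoinsdsunRond} and collect the terms of order $\varepsilon^0$: because $\Psi_{+\ominus}(\varepsilon)|\varepsilon\tilde C_\ominus|^{-1}$ is $O(1)$ with limit $\Psi_{+\ominus}^{(1)}|\tilde C_\ominus|^{-1}$, one obtains an algebraic identity that, after multiplying through by $(-A_{\ominus\ominus}^{-1})$ on the right and using $K_{+\ominus}:=C_+^{-1}A_{+\ominus}+\Psi|C_-^{-1}|A_{-\ominus}$ together with the relation $-\Psi U = C_+^{-1}(A_{+-}+A_{++}\Psi)$ (the analogue of \eqref{relationPsiUforoplus}, obtained here by taking $\varepsilon\to 0$ in \eqref{EqPsiPMoins}), yields \eqref{eqforPsiplusminusronone}. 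Then I take the coefficient of $\varepsilon$ in \eqref{EqPsiPMoins}, again using $-\Psi U = C_+^{-1}(A_{+-}+A_{++}\Psi)$ to eliminate the $\varepsilon\tilde C_+$-contributions, which gives
\[
K_{++}\Psi_{+-}^{(1)} + \Psi_{+-}^{(1)}U + K_{+\ominus}\Psi_{+\ominus}^{(1)}(-A_{\ominus\ominus}^{-1})(A_{\ominus-}+A_{\ominus+}\Psi)
= -\Psi|C_-^{-1}|\tilde C_- U - C_+^{-1}\tilde C_+\Psi U,
\]
with $K_{++}=C_+^{-1}A_{++}+\Psi|C_-^{-1}|A_{-+}$; recognising $K=K_{++}+K_{+\ominus}(-A_{\ominus\ominus}^{-1})A_{\ominus+}$ via \eqref{matrixK}, and factoring $K$ out of the $P_\ominus$ term (note $P_\ominus = \Psi_{+\ominus}^{(1)}(-A_{\ominus\ominus}^{-1})(A_{\ominus-}+A_{\ominus+}\Psi)$ and $\Psi_{+\ominus}^{(1)} = K_{+\ominus}(-A_{\ominus\ominus}^{-1})|\tilde C_\ominus|$, so $KP_\ominus$ absorbs exactly the cross term), one lands on \eqref{Sylveqforzerotominuscase}.

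I expect the main obstacle to be purely bookkeeping: correctly tracking which block carries the $\varepsilon^{-1}$ singularity and verifying that after clearing it the derivative operator is genuinely block-triangular (so the Implicit Function Theorem applies), and then, in the first-order expansion, being careful that $\Psi_{+\ominus}(\varepsilon)$ starts at order $\varepsilon$ so that $\Psi_{+\ominus}(\varepsilon)|\varepsilon\tilde C_\ominus|^{-1}$ contributes already at order $\varepsilon^0$ — this is what produces the extra $-KP_\ominus$ term in \eqref{Sylveqforzerotominuscase} and the formula \eqref{eqforPsiplusminusronone}, and it is the one place where the $\ominus$-case differs structurally from the $\oplus$-case rather than just notationally.
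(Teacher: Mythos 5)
Your setup for analyticity (post-multiplying to clear the $\varepsilon^{-1}$, Implicit Function Theorem, block-triangular derivative) and your derivation of $\overline\Psi=[0\ \ \Psi]$ and of $\Psi^{(1)}_{+\ominus}$ from the $\varepsilon^0$ terms of \eqref{EqPsiPMoinsdsunRond} all track the paper's argument and are sound. But the final step — deriving the Sylvester equation \eqref{Sylveqforzerotominuscase} — has a genuine gap, and your own closing remark identifies precisely the phenomenon you then fail to apply consistently.

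You correctly note that $\Psi_{+\ominus}(\varepsilon)\,|\varepsilon\tilde C_\ominus|^{-1}$ is $O(1)$ because $\Psi_{+\ominus}(\varepsilon)=O(\varepsilon)$. But the same division by $\varepsilon$ means this product contributes $\Psi^{(2)}_{+\ominus}|\tilde C_\ominus|^{-1}$ at order $\varepsilon^1$. So the coefficient of $\varepsilon$ in \eqref{EqPsiPMoins} necessarily contains the unknown second-order block $\Psi^{(2)}_{+\ominus}$, which your displayed intermediate equation omits entirely. That is why the paper must also take the $\varepsilon^1$ coefficient of \eqref{EqPsiPMoinsdsunRond} (their equation \eqref{Psi2exprThm4}) to express $\Psi^{(2)}_{+\ominus}|\tilde C_\ominus^{-1}|$, and then substitute into \eqref{Psi2EqtwoThm3} before reorganising. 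This elimination of $\Psi^{(2)}_{+\ominus}$ is the essential extra work in the $\ominus$ case, and it cannot be bypassed by ``factoring out $K$''. Relatedly, your claimed cross term $K_{+\ominus}\Psi^{(1)}_{+\ominus}(-A_{\ominus\ominus}^{-1})(A_{\ominus-}+A_{\ominus+}\Psi)$ does not even type-check: $K_{+\ominus}$ has $|\mathcal S_\ominus|$ columns while $\Psi^{(1)}_{+\ominus}$ has $|\mathcal S_+|$ rows, so this product is undefined, and it certainly does not equal $KP_\ominus$.

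A second error: the identity $-\Psi U = C_+^{-1}(A_{+-}+A_{++}\Psi)$ that you invoke is the one from Theorem~\ref{Theorem32}, and it does not hold here. In Theorem~\ref{ThmFour}, $U$ and $K$ are built from the censored generator $Q$ of \eqref{EqRefQ} — see \eqref{UforThm4}--\eqref{KforThm4} — so the correct relation is
\begin{equation*}
-\Psi U = C_{+}^{-1}\bigl(A_{+-}+A_{+\ominus}(-A_{\ominus\ominus}^{-1})A_{\ominus-}\bigr)
+C_{+}^{-1}\bigl(A_{++}+A_{+\ominus}(-A_{\ominus\ominus}^{-1})A_{\ominus+}\bigr)\Psi,
\end{equation*}
with the $A_{+\ominus}(-A_{\ominus\ominus}^{-1})A_{\ominus\pm}$ corrections present. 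Using the $\oplus$-case identity drops exactly the terms that eventually combine with the $\Psi^{(2)}_{+\ominus}$ contribution to produce the $-KP_\ominus$ term, so your route would not reproduce \eqref{Sylveqforzerotominuscase}.
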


\begin{proof}
Here, to remove the effect of $\ve^{-1}$ as a coefficient of
$|\tilde{C}_{\om}^{-1}|$ in 
\eqref{EqPsiPMoinsdsunRond} and \eqref{EqPsiPMoins},
we define 
$\Gamma(\ve) = \ve^{-1}\Psi_{+\om}(\ve)$.
We define  the operator, for
$\mathcal{X}=
\left[\begin{array}{cc}
\mathcal{X}_{+\om}&
\mathcal{X}_{+-}
\end{array}\right]
$,
\begin{align*}
 F\left(\varepsilon,\mathcal{X}
\right)
&=
(C_{+}+\varepsilon\tilde{C}_{+})^{-1}
\left[\begin{array}{cc}
A_{+\om}+\ve A_{++} \mathcal{X}_{+\om}
&
A_{+-}+A_{++}\mathcal{X}_{+-}
\end{array}\right]
\\
& \hspace{0.3cm}
+\left[\begin{array}{cc}
\mathcal{X}_{+\om}|\tilde{C}_{\om}^{-1}|
&
 \mathcal{X}_{+-}|C_{-}+\varepsilon\tilde{C}_{-}|^{-1}
\end{array}\right]
\\
& \hspace{0.7cm} 
\times
\left[\begin{array}{cc}
A_{\om\om} +\ve A_{\om+}\mathcal{X}_{+\om}  &   A_{\om-}+A_{\om+}\mathcal{X}_{+-}
\\
A_{-\om} +\ve A_{-+}\mathcal{X}_{+\om}  &   A_{--}+A_{-+}\mathcal{X}_{+-}
\end{array}\right].
\end{align*}
One shows that $\left[\begin{array}{cc}
\Psi_{+\om}^{(1)}&
\Psi
\end{array}\right]$
is a solution of $F(\ve,\mathcal{X})=0$,
where 
$\Psi_{+\om}^{(1)}$ is defined in 
\eqref{eqforPsiplusminusronone}.
Next, we take the derivative of $F$ with respect to $\mathcal{X}$,
evaluated at $\ve=0$,
$\mathcal{X}=
\left[\begin{array}{cc}
\Psi_{+\om}^{(1)}&
\Psi
\end{array}\right]
$.
The system is equivalent to the set of equations
\begin{align*}
Y_{+-}U+KY_{+-} & =  H_{+-}+ H_{+\om} (-A_{\om\om}^{-1})(A_{\om-}+A_{\om+}\Psi),\\
  Y_{+\om}  &= Y_{+-} |C_-^{-1}| A_{-\om} (-A_{\om\om}^{-1})|\tilde{C}_{\om}|
+H_{+\om}A_{\om\om}^{-1}|\tilde{C}_{\om}|,
\end{align*}
where, by \eqref{EqRefQ}, \eqref{matrixU}, \eqref{matrixK},
\begin{align}
U & =  {\left|C_{-}^{-1}\right|(A_{--}}+A_{-\ominus}\left(-A_{\ominus\ominus}^{-1}\right)A_{\ominus-})
+{\left|C_{-}^{-1}\right|(A_{-+}}+A_{-\ominus}\left(-A_{\ominus\ominus}^{-1}\right)A_{\ominus+})\Psi,
\label{UforThm4}\\
K & =  C_{+}^{-1}(A_{++}+A_{+\ominus}\left(-A_{\ominus\ominus}^{-1}\right)A_{\ominus+})
+\Psi\left|C_{-}^{-1}\right|(A_{-+}+A_{-\ominus}\left(-A_{\ominus\ominus}^{-1}\right)A_{\ominus+}).
\label{KforThm4}
\end{align}
The system is non-singular so that 
$\left[\begin{array}{cc}
\Gamma(\ve)&
\Psi_{+-}(\ve)
\end{array}\right]$
 is analytic.

 The block components of $\overline{\Psi}$ are obtained as follows.
As
$\ve \Gamma(\ve) = \Psi_{+\om}(\ve)$,
we find that
$\Psi_{+\om}(0)=0$.
Next, define
\begin{align}\label{XlimPsidiveps}
W=\lim_{\varepsilon \rightarrow 0}
\Gamma(\ve)\vert \tilde{C}_{\ominus}\vert^{-1}
\end{align}
which is finite since $\Gamma(\ve)$ is analytic. 
We rewrite 
\eqref{EqPsiPMoinsdsunRond} and find that
\begin{align}\label{valueXPsiPM}
W & =  
C_{+}^{-1}A_{+\ominus}(-A_{\ominus\ominus})^{-1}
 +
\lim_{\varepsilon \rightarrow 0}{\Psi}_{+-}(\varepsilon)
\vert C_{-}\vert^{-1}
A_{-\ominus} 
(-A_{\ominus\ominus})^{-1}.
\end{align}
Taking the limit as $\varepsilon\rightarrow 0$
in \eqref{EqPsiPMoins}
and replacing $W$ by
\eqref{valueXPsiPM}
leads to \eqref{RiccEquaNullph}. Thus,
$\lim_{\varepsilon \rightarrow 0}{\Psi}_{+-}(\varepsilon)=\Psi,$
%
%
and  \eqref{RefPsi0VersMinus} is proved.

The block components of $\Psi^{(1)}$ are obtained as follows.
Taking the coefficients of $\ve^0$ in \eqref{EqPsiPMoinsdsunRond} gives directly \eqref{eqforPsiplusminusronone}.
To show \eqref{Sylveqforzerotominuscase}, 
we take the coefficients of $\ve^2$  in
\eqref{EqPsiPMoins}  and get the equation
\begin{align}
\Psi_{+\ominus}^{(2)}|\tilde{C}_{\ominus}^{-1}|(A_{\ominus-}+A_{\ominus+}\Psi)
= &
-(\Psi_{+-}^{(1)}|C_-^{-1}| 
+
 \Psi |C_{-}^{-1}| \tilde{C}_{-} |C_{-}^{-1}|
 )
 (A_{--}+A_{-+}\Psi)
 \nonumber
\\
& -
(C_+^{-1}A_{++} +\Psi |C_-^{-1}|A_{-+}+\Psi_{+\ominus}^{(1)} |\tilde{C}_{\ominus}^{-1}| A_{\ominus +})
\Psi_{+-}^{(1)} \nonumber 
\\
 &+ C_+^{-1}\tilde{C}_+ C_+^{-1}(A_{+-}+A_{++}\Psi).
\label{Psi2EqtwoThm3}
\end{align}
We equate the coefficients of $\ve$ in \eqref{EqPsiPMoinsdsunRond} and get
\begin{align}
\Psi_{+\ominus}^{(2)}|\tilde{C}_{\ominus}^{-1}|
= &
(
C_{+}^{-1} A_{++}+\Psi|C_-^{-1}|A_{-+}
+\Psi_{+\ominus}^{(1)}|\tilde{C}_{\ominus}^{-1}|A_{\ominus+}
)
\Psi_{+\ominus}^{(1)}
(-A_{\ominus\ominus}^{-1})
\nonumber
\\ & +
(
\Psi |C_{-}^{-1}|\tilde{C}_{-}
+\Psi_{+-}^{(1)}
)
|C_{-}^{-1}|A_{-\ominus}(-A_{\ominus\ominus}^{-1})
\nonumber
\\
& -C_+^{-1}\tilde{C}_+C_{+}^{-1}A_{+\ominus}(-A_{\ominus\ominus}^{-1}).
\label{Psi2exprThm4}
\end{align}
By the Riccati equation \eqref{RiccEquaNullph} and the definition \eqref{UforThm4} of $U$, we have 
\begin{align*}
-\Psi U&=C_{+}^{-1}(A_{+-}+A_{+\ominus}\left(-A_{\ominus\ominus}^{-1}\right)A_{\ominus-})
+C_{+}^{-1}\left(A_{++}+A_{+\ominus}\left(-A_{\ominus\ominus}^{-1}\right)A_{\ominus+}\right)\Psi.
\end{align*}
We replace the first coefficient $\Psi_{+\om}^{(1)}$ in \eqref{Psi2exprThm4} 
by its expression \eqref{eqforPsiplusminusronone},
then we replace 
$\Psi_{+\ominus}^{(2)}|\tilde{C}_{\ominus}^{-1}|$
 in \eqref{Psi2EqtwoThm3} by the modified right-hand side of \eqref{Psi2exprThm4}.
We put together the coefficients of $\Psi_{+-}^{(1)}$, use 
\eqref{UforThm4}, \eqref{KforThm4}
and eventually obtain \eqref{Sylveqforzerotominuscase}.
\end{proof}

\begin{rem}\rm{
   \label{explainPsiPlMin0is0}
   The physical justification of ${\Psi}_{+\ominus}(0)=0$ goes as
   follows: $({\Psi}_{+\ominus}(\varepsilon))_{ij}$ is the probability
   that the level moves to $0$ in phase
   $j\in \mathcal{S}_{\ominus},$ given that the initial level is $0$
   and the phase is $i\in \mathcal{S}_{+}$, in the limit, when
   $\varepsilon$ approaches $0,$ this probability tends to $0$ because the
   fluid can only return to level zero in a phase of $\mathcal{S}_{-}$.
   }
\end{rem}


\subsection{General case}\label{GenCaseMig}

Assume $\tilde{c}_i\neq 0$ for $i$ in $\mathcal{S}_0$, so that all the
phases of $\mathcal{S}_{0}$ disseminate in $\mathcal{S}_{+}$ and $\mathcal{S}_{-}$
after perturbation. 
The infinitesimal generator becomes
\begin{equation}\label{GeneratorGeneralCase}
A=\left[\begin{array}{cccc}
A_{++} & A_{+\oplus} & A_{+\ominus} & A_{+-}\\
A_{\oplus+} & A_{\oplus\oplus} & A_{\oplus\ominus} & A_{\oplus-}\\
A_{\ominus+} & A_{\ominus\oplus} & A_{\ominus\ominus} & A_{\ominus-}\\
A_{-+} & A_{-\oplus} & A_{-\ominus} & A_{--}
\end{array}\right].
\end{equation}
We find here a superposition of the effects observed in the two special cases examined in Sections \ref{AffectedCase+} and \ref{Casemigrto-}.
The matrix of first return probabilities from above of the perturbed
system  takes the form
\begin{equation}\label{PsiDivis4}
\boldsymbol{\Psi}(\varepsilon)=\left[\begin{array}{cc}
\Psi_{+\ominus}(\varepsilon) & \Psi_{+-}(\varepsilon)\\
\Psi_{\oplus\ominus}(\varepsilon) & \Psi_{\oplus-}(\varepsilon)
\end{array}\right],
\end{equation}
it is the unique solution of the usual Riccati equation which  may be
rewritten as the following  set of four equations:

{\footnotesize 
\begin{align}
&
C_{+}^{-1}(\varepsilon) A_{+\ominus}
+C_{+}^{-1}(\varepsilon)\vligne{A_{++} & A_{+\oplus}}
         \vligne{\Psi_{+\ominus}(\ve) \\  \Psi_{\op\om}(\ve)}
+\vligne{\Psi_{+\om}(\ve) &\Psi_{+-}(\ve) }
   \vligne{U_{\om\om}(\ve) \\U_{-\om}(\ve) }
=
0,
 \label{A_11}
\\
&
    C_{+}^{-1}(\varepsilon) A_{+-}
 + C_{+}^{-1}(\varepsilon)\vligne{A_{++} & A_{+\oplus}}
         \vligne{\Psi_{+-}(\ve) \\  \Psi_{\op-}(\ve)}
+ \vligne{\Psi_{+\om}(\ve) &\Psi_{+-}(\ve) }
   \vligne{U_{\om-}(\ve) \\U_{--}(\ve) }
=
0,
\label{A_12}
\\
&
    (\ve \tilde{C}_{\op})^{-1} A_{\op\om}
 +   (\ve \tilde{C}_{\op})^{-1}\vligne{A_{\op+} & A_{\op\oplus}}
        \vligne{\Psi_{+\ominus}(\ve) \\  \Psi_{\op\om}(\ve)}
+ \vligne{\Psi_{\op\om}(\ve) &\Psi_{\op-}(\ve) }
 \vligne{U_{\om\om}(\ve) \\U_{-\om}(\ve) }
=
0,
\label{A_21}
\\
&
    (\ve \tilde{C}_{\op})^{-1} A_{\op\om}
 +   (\ve \tilde{C}_{\op})^{-1}\vligne{A_{\op+} & A_{\op\oplus}}
        \vligne{\Psi_{+-}(\ve) \\  \Psi_{\op-}(\ve)}
+ \vligne{\Psi_{\op\om}(\ve) &\Psi_{\op-}(\ve) }
 \vligne{U_{\om-}(\ve) \\U_{--}(\ve) }
=
0.
\label{A_22}
\end{align}
}

We show below that $\bs{\Psi}(\ve)$ is analytic, thus we may write the matrices $U(\ve)$ and $K(\ve)$ as 
\begin{align}
U(\ve)&= \sum_{n=-1}^{\infty} \ve^n U_n
\text{\quad with \quad }
U_n = 
\left[\begin{array}{cc}
U_{\om\om}^{(n)} & U_{\om-}^{(n)} \\
U_{-\om}^{(n)} & U_{--}^{(n)} 
\end{array}\right],
\\
K(\ve)&= \sum_{n=-1}^{\infty} \ve^n K_n
\text{\quad with \quad }
K_n = 
\left[\begin{array}{cc}
K_{++}^{(n)} & K_{+\op}^{(n)} \\
K_{\op +}^{(n)} & K_{\op\op}^{(n)} 
\end{array}\right],
\end{align}
in particular, the  blocks
\begin{align}
U_{\om\om}^{(-1)}&=|\tilde{C}_{\om}^{-1}|A_{\om\om} + |\tilde{C}_{\om}^{-1}| A_{\om\op}\Psi_{\op\om},
\label{defUpertub}\\
K_{\op\op}^{(-1)}&=\tilde{C}_{\op}^{-1}A_{\op\op} + \Psi_{\op\om}|\tilde{C}_{\om}^{-1}|A_{\om\op},
\label{defKpertub}
\end{align}
play an important role in what follows.

\begin{theorem}\label{ThmFive}
The matrix $\bs{\Psi}(\varepsilon)$ of first return probabilities, minimal nonnegative solution to (\ref{A_11}-\ref{A_22}) 
 for the perturbed model is near zero and may be written as
\begin{equation*}
\bs{\Psi}(\varepsilon)=\overline{\Psi}+\ve\Psi^{(1)}+O(\ve^2),
\end{equation*} 
where
\begin{equation}
\overline{\Psi}=\left[\begin{array}{cc}
0 & \Psi\\
\Psi_{\oplus\ominus} & \Psi_{\oplus-}
\end{array}\right].
\end{equation}
The block $\Psi$ is  given in  \eqref{RiccEquaNullph},
\begin{align}\label{Psiprmgen}
\Psi_{\op -}
= (-K_{\op\op}^{(-1)})^{-1}
  &\big(
  			\tilde{C}^{-1}_{\op}(A_{\op-} +A_{\op+}\Psi) 
  			+
  			\Psi_{\op\om} |\tilde{C}_{\om}^{-1}|
  			    (A_{\om-}+A_{\om+}\Psi)
  	\big),
\end{align}
$\Psi_{\op\om}$ is the minimal nonnegative solution to the Riccati equation 
\begin{equation}\label{RiccatieqGencaseperturb}
\tilde{C}_{\op}^{-1}A_{\op\om}+\tilde{C}_{\op}^{-1}A_{\op\op}X+X|\tilde{C}_{\om}^{-1}|A_{\om\om}+X|\tilde{C}_{\om}^{-1}|A_{\om\op}X=0.
\end{equation}
Furthermore,
\begin{equation}
{\Psi}^{(1)}=\left[\begin{array}{cc}
\Psi^{(1)}_{+\ominus} & \Psi^{(1)}_{+-}\\
\Psi_{\oplus\ominus}^{(1)} & \Psi_{\oplus-}^{(1)}
\end{array}\right],
\end{equation}
with
\begin{align}\label{eqforPsionepom}
\Psi^{(1)}_{+\ominus}
&=
\big(
C_+^{-1} (A_{+\ominus}+A_{+\oplus}\Psi_{\op\om}) 
+
 \Psi |C_-^{-1}| + A_{-\op}\Psi_{\op\om})
\big)
(-U_{\ominus \ominus}^{(-1)})^{-1},
\end{align}
${\Psi}_{\op\om}^{(1)}$ is the unique solution of the Sylvester equation
\begin{align}\label{SylvEqforPsiopom}
K_{\op\op}^{(-1)} X +X U_{\om\om}^{(-1)} 
= &
-(\tilde{C}^{-1}_{\op}A_{\op+}+\Psi_{\op\om}|C_{\om}^{-1}|A_{\om+})\Psi_{+\om}^{(1)}
\nonumber\\
&-
\Psi_{\op-}|C_-^{-1}|(A_{-\om}+A_{-+}\Psi_{+\om}+A_{-\op}\Psi_{\op\om}),
\end{align}
and with
\begin{align}\label{eq:psiopmderivgen}
\Psi_{\op-}^{(1)}
&=
(-K_{\op\op}^{(-1)})^{-1}
\big(
	K_{\op+}^{(-1)}\Psi_{+-}^{(1)}
	+ \Psi_{\op\om}|C_-^{-1}|U_{\om-}^{(-1)}
	- \Psi_{\op -} U_{--}^{(0)}
	\big),
\end{align}
and
$\Psi_{+-}^{(1)}$ is the unique solution of the Sylvester equation
\begin{align}\label{Psi1finalgen}
& (C_+^{-1} A_{++} + \Psi |C_-^{-1}|A_{-+}) \Psi_{+-}^{(1)}+\Psi_{+-}^{(1)} U_{--}^{(0)}
\nonumber \\ &\hspace{0.5cm}+
(C_+^{-1} A_{+\op} + \Psi |C_-^{-1}|A_{-\op}) \Psi_{+\op}^{(1)}
+\Psi_{+\om}^{(2)}U_{\om-}^{(-1)}
\nonumber \\
&\hspace{0.5cm}=
C_+^{-1} \tilde{C_+}C_+^{-1} (A_{+-}+A_{++}\Psi+A_{+\op}\Psi_{\op-})
-\Psi |C_-^{-1}|\tilde{C}_-  U_{--}^{(0)},
\end{align}
where
\begin{align}\label{eqforPsitwopom}
\Psi^{(2)}_{+\ominus}
=&
\big(
-C_+^{-1}C_+ C_+^{-1} (A_{+\ominus}+A_{+\oplus}\Psi_{\op\om}) 
+ C_+^{-1} (A_{++} \Psi_{+\om}^{(1)} + A_{+\op}\Psi_{\op\om}^{(1)})
\nonumber\\
  			&+
  (\Psi^{(1)}+\Psi |C_-^{-1}|\tilde{C}_-) U_{-\om}^{(0)}
  +
 \Psi |C_-^{-1}| (A_{-+}\Psi_{+\om}^{(1)}+ A_{-\op}\Psi_{\op\om}^{(1)})
\big)
(-U_{\ominus \ominus}^{(0)})^{-1}.
\end{align}
\end{theorem}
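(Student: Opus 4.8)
The plan is to follow the same scheme as in Theorems~\ref{Theorem32} and \ref{ThmFour}, but now accounting for the two simultaneous singularities coming from the blocks $\mathcal S_\oplus$ and $\mathcal S_\ominus$. First, to strip the $\ve^{-1}$ factors, I would rescale: introduce $\Gamma(\ve) = \ve^{-1}\Psi_{+\om}(\ve)$ and $\Delta(\ve)=\ve^{-1}\Psi_{\op\om}(\ve)$ (the first because $\mathcal S_\ominus$ returns only arise at order $\ve$, exactly as in Theorem~\ref{ThmFour}; the second because in the perturbed block-$(\op,\om)$ Riccati equation the coefficient is $\ve^{-1}\tilde C_\op^{-1}$), and pre-multiply equations \eqref{A_21}--\eqref{A_22} by $\ve\tilde C_\op$. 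This yields a fixed-point map $F(\ve,\mathcal X)=0$ whose value at $\ve=0$ is a regular system; I would then exhibit $\overline{\mathcal X}=\big[\begin{smallmatrix}\Psi_{+\om}^{(1)} & \Psi\\ \Psi_{\op\om} & \Psi_{\op-}\end{smallmatrix}\big]$ as a solution, compute $\partial_{\mathcal X}F(0,\overline{\mathcal X})$, and check it is a nonsingular operator, invoking the spectral separation of $K_{\op\op}^{(-1)}$ and $-U_{\om\om}^{(-1)}$ (for the $\Psi_{\op\om}^{(1)}$-Sylvester equation) and of $K$ and $-U$ (for the $\Psi_{+-}^{(1)}$-equation). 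Analyticity of $\bs\Psi(\ve)$ then follows from the Implicit Function Theorem; the excerpt explicitly says the details are omitted here, so I would only sketch this.

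Next I would identify the constant term $\overline\Psi$ by taking $\ve\to0$ limits in \eqref{A_11}--\eqref{A_22}. From $\Psi_{+\om}(\ve)=\ve\Gamma(\ve)$ and $\Psi_{\op\om}(\ve)=\ve\Delta(\ve)$ one gets $\Psi_{+\om}(0)=0$ and $\Psi_{\op\om}(0)=\lim\Psi_{\op\om}(\ve)$; the limit of \eqref{A_21} (after pre-multiplication) is precisely the Riccati equation \eqref{RiccatieqGencaseperturb} for $\Psi_{\op\om}$, so minimality pins it down. The limit of \eqref{A_22} gives \eqref{Psiprmgen} for $\Psi_{\op-}$ (this is where $K_{\op\op}^{(-1)}$ enters, via \eqref{defKpertub}). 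Then, substituting $W=\lim \Gamma(\ve)|\tilde C_\om^{-1}|$-type quantities as in the proof of Theorem~\ref{ThmFour}, the limit of \eqref{A_12} collapses to the unperturbed Riccati equation \eqref{RiccEquaNullph}, so $\Psi_{+-}(0)=\Psi$, establishing the stated form of $\overline\Psi$.

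For the first-order term I would expand each of the four equations in powers of $\ve$. The block $\Psi_{+\om}^{(1)}$ comes out of the $\ve^0$-coefficient of \eqref{A_11} (rewritten via $\Gamma$), giving \eqref{eqforPsionepom}; note this uses $U_{\om\om}^{(-1)}$ from \eqref{defUpertub}. The block $\Psi_{\op\om}^{(1)}$ is obtained from the $\ve^1$-coefficient of \eqref{A_21}, which after collecting terms is the Sylvester equation \eqref{SylvEqforPsiopom} with coefficients $K_{\op\op}^{(-1)}$ and $U_{\om\om}^{(-1)}$. The block $\Psi_{\op-}^{(1)}$ follows algebraically (no Sylvester equation) from the $\ve^1$-coefficient of \eqref{A_22}, giving \eqref{eq:psiopmderivgen}. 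The remaining and most delicate piece is $\Psi_{+-}^{(1)}$: as in Theorem~\ref{ThmFour}, the naive $\ve^1$-coefficient of \eqref{A_12} still contains the unknown $\Psi_{+\om}^{(2)}$, so I would extract $\Psi_{+\om}^{(2)}$ from the $\ve^1$-coefficient of \eqref{A_11}, obtaining \eqref{eqforPsitwopom} (which involves $U_{\om-}^{(0)}$, $U_{-\om}^{(0)}$, $\Psi_{\op\om}^{(1)}$), and then back-substitute, using the relation $-\Psi U_{--}^{(0)} = C_+^{-1}(A_{+-}+A_{+\op}\Psi_{\op-}+A_{++}\Psi)$ obtained from the $\ve^0$-limit of \eqref{A_12}, to cancel the singular-looking terms and arrive at the clean Sylvester equation \eqref{Psi1finalgen}.

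The main obstacle will be the bookkeeping in the last step: two rescaled unknowns interacting means the $\ve$-expansions of $U(\ve)$ and $K(\ve)$ start at order $\ve^{-1}$, and the elimination of $\Psi_{+\om}^{(2)}$ must be done while simultaneously tracking the $O(\ve)$ contributions of $\Psi_{\op\om}(\ve)$, $\Psi_{\op-}(\ve)$ and the perturbed $|C_-+\ve\tilde C_-|^{-1}=|C_-^{-1}|-\ve|C_-^{-1}|\tilde C_-|C_-^{-1}|+O(\ve^2)$. Showing that all the apparently divergent contributions cancel and that what survives is exactly \eqref{Psi1finalgen} with the unperturbed coefficients $(C_+^{-1}A_{++}+\Psi|C_-^{-1}|A_{-+})$ and $U_{--}^{(0)}$ is the crux; everything else is a routine, if lengthy, superposition of the calculations already carried out in Sections~\ref{AffectedCase+} and~\ref{Casemigrto-}.
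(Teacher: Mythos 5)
Your plan follows the same overall route as the paper's proof: rescale to regularize the fixed-point map, invoke the Implicit Function Theorem for analyticity, and then extract the coefficients of $\ve^0$ and $\ve^1$ from \eqref{A_11}--\eqref{A_22}. But there are two problems.

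First, the auxiliary rescaling $\Delta(\ve)=\ve^{-1}\Psi_{\op\om}(\ve)$ is inconsistent with the rest of your own sketch. You correctly take the $(\op,\om)$ entry of $\overline{\mathcal X}$ to be $\Psi_{\op\om}\neq 0$ (the minimal solution of \eqref{RiccatieqGencaseperturb}); but then $\ve^{-1}\Psi_{\op\om}(\ve)$ has no finite limit as $\ve\to 0$, so $\Delta$ is not a well-defined analytic unknown. The correct move, and what the paper does, is to keep $\Psi_{\op\om}(\ve)$ unrescaled and only use $\Gamma(\ve)=\ve^{-1}\Psi_{+\om}(\ve)$ together with pre-multiplication by $\diag(I,\ve I)$; that already absorbs the $\ve^{-1}$ factors sitting in front of $\tilde{C}_\op^{-1}$ and $|\tilde{C}_\om^{-1}|$ in \eqref{A_21}--\eqref{A_22}, and no second rescaling is needed or possible.

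Second, and more seriously, you assert that after substituting the $\ve^0$ expressions back, "the limit of \eqref{A_12} collapses to the unperturbed Riccati equation \eqref{RiccEquaNullph}." It does not collapse; this is the crux and the bulk of the paper's proof, and it is missing from your sketch. After eliminating $\Psi^{(1)}_{+\ominus}$, $\Psi_{\op\om}$ and $\Psi_{\op-}$ from the $\ve^0$ coefficient of \eqref{A_12}, one obtains a Riccati equation for $\Psi_{+-}(0)$ whose coefficient matrix $T$ has the form
\[
T = \begin{bmatrix}A_{++}&A_{+-}\\A_{-+}&A_{--}\end{bmatrix}
+ \begin{bmatrix}A_{+\op}&A_{+\om}\\A_{-\op}&A_{-\om}\end{bmatrix}
D
\begin{bmatrix}A_{\op+}&A_{\op-}\\A_{\om+}&A_{\om-}\end{bmatrix},
\]
where $D$ is built from $(-K_{\op\op}^{(-1)})^{-1}$, $(-U_{\om\om}^{(-1)})^{-1}$, $\Psi_{\op\om}$, $\tilde{C}_\op$ and $\tilde{C}_\om$. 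One must then prove that $D=(-A_{00}^{-1})$ under the $\mathcal{S}_\op\cup\mathcal{S}_\om$ partition, so that $T=Q$ as in \eqref{EqRefQ}, and only then does $\Psi_{+-}(0)=\Psi$. Verifying $D=(-A_{00}^{-1})$ block by block requires repeated use of the Riccati equation \eqref{RiccatieqGencaseperturb}, of the definitions \eqref{defUpertub}--\eqref{defKpertub}, and of resolvent-type inversion identities; it is the longest and most delicate part of the argument. You have identified a genuine bookkeeping difficulty in the $\Psi_{+-}^{(1)}$ step, but you have mislocated the principal obstacle: it sits already in establishing the constant term $\overline\Psi$.
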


\begin{proof}

To remove the effect of $\ve^{-1}$ as $\ve \rightarrow 0$,
we need to combine the transformations of the previous two theorems.
We pre-multiply the Riccati equation by $\diag(I,\ve I)$
and we use the matrix $\Gamma(\ve) = \ve^{-1}\Psi_{+\om}(\ve)$.
We obtain a new fixed-point equation, from which we eventually prove,
by following the same steps as in Theorem \ref{Theorem32} and Theorem \ref{ThmFour}, 
that the solutions are matrices of analytic functions.

Observe the terms in $\ve^{-1}$ in the equations (\ref{A_11}) to (\ref{A_22}):
\begin{itemize}
\item 
we conclude from 
 \eqref{A_11} that $\Psi_{+\om}=0$ by a similar argument to the proof of Theorem \ref{ThmFour};
 \item
multiply  \eqref{A_21} by $\ve$ and let $\ve$  tend to zero to obtain the Riccati equation \eqref{RiccatieqGencaseperturb} satisfied by 
  $\Psi_{\op\om}$;
 \item
multiply  \eqref{A_22} by $\ve$  and let $\ve$   tend to zero, 
 gives  \eqref{Psiprmgen}, 
taking into  account that
 $\lim_{\ve \rightarrow 0} \Psi(\ve)=\Psi$, an equality that is proved below.
\end{itemize}
 To determine $\Psi_{+-}(0)$ is more involved.
 We proceed as follows. 
 First, from \eqref{A_11}, we take the terms in $\ve^{0}$ and we find the expression 
 \eqref{eqforPsionepom} for $\Psi^{(1)}_{+\ominus}$ that we replace in \eqref{A_12}.
 From \eqref{A_12}, we take the terms in $\varepsilon^{0}$ and obtain $\Psi_{+-}$, after a reorganization of the terms,
as the minimal nonnegative solution to 
\begin{equation*}
C_{+}^{-1}T_{+-}+C_{+}^{-1}T_{++}X+X|C_{-}^{-1}|T_{--}+X|C_{-}^{-1}|T_{-+}X=0,
\end{equation*}
with
\begin{align*}
\left[\begin{array}{cc}
T_{++} & T_{+-}\\
T_{-+} & T_{--}
\end{array}\right]
=& \left[\begin{array}{cc}
A_{++} & A_{+-}\\
A_{-+} & A_{--}
\end{array}\right]
\nonumber
\\
 &+
\left[\begin{array}{cc}
A_{+\oplus} & A_{+\ominus}\\
A_{-\oplus} & A_{-\ominus}
\end{array}\right]
\left[\begin{array}{cc}
D_{\oplus\oplus} & D_{\oplus\ominus}\\
D_{\ominus\oplus} & D_{\ominus\ominus}
\end{array}\right]
\left[\begin{array}{cc}
A_{\oplus+} & A_{\oplus-}\\
A_{\ominus+} & A_{\ominus-}
\end{array}\right].
\end{align*}
where
\begin{align*}
D_{\oplus\oplus} & =  (-K_{\oplus\oplus}^{(-1)})^{-1}\tilde{C}_{\oplus}^{-1}+\Psi_{\oplus\ominus}D_{\ominus\oplus},\\
D_{\ominus\oplus} & =  (-U_{\ominus\ominus}^{(-1)})^{-1}|\tilde{C}_{\ominus}^{-1}|A_{\ominus\oplus}(-K_{\oplus\oplus}^{(-1)})^{-1}\tilde{C}_{\oplus}^{-1},\\
D_{\oplus\ominus} & =  (-K_{\oplus\oplus}^{(-1)})^{-1}\Psi_{\oplus\ominus}|\tilde{C}_{\ominus}^{-1}|+\Psi_{\oplus\ominus}D_{\ominus\ominus},\\
D_{\ominus\ominus} & =  (-U_{\ominus\ominus}^{(-1)})^{-1}|\tilde{C}_{\ominus}^{-1}|(I+A_{\ominus\oplus}(-K_{\oplus\oplus}^{(-1)})^{-1}\Psi_{\oplus\ominus}|\tilde{C}_{\ominus}^{-1}|).
\end{align*}
To prove that the matrix $T$ is identical to the matrix $Q$ defined in
\eqref{EqRefQ}, we only need to show that the matrix made up of the
four blocks labeled with $D$s
is equal to
$(-A_{00}^{-1})$, partitionned according to \eqref{GeneratorGeneralCase}, as
\begin{equation}
(-A_{00}^{-1})
=
\left[\begin{array}{cc}
B_{\oplus\oplus} & B_{\oplus\ominus}\\
B_{\ominus\oplus} & B_{\ominus\ominus}
\end{array}\right]
\end{equation} 
where
\begin{align*}
B_{\oplus\oplus} & =  -(A_{\oplus\oplus}+A_{\oplus\ominus}(-A_{\ominus\ominus}^{-1})A_{\ominus\oplus})^{-1},\\
B_{\ominus\oplus} & =  (-A_{\ominus\ominus}^{-1})A_{\ominus\oplus}B_{\oplus\oplus},\\
B_{\oplus\ominus} & =  B_{\oplus\oplus}A_{\oplus\ominus}(-A_{\ominus\ominus}^{-1}),\\
B_{\ominus\ominus} & =  -(A_{\ominus\ominus}+A_{\ominus\oplus}(-A_{\oplus\oplus}^{-1})A_{\oplus\ominus})^{-1}.
\end{align*}

By \eqref{RiccatieqGencaseperturb}, we have 
\begin{equation*}
A_{\oplus\ominus}=-\tilde{C}_{\oplus}K_{\oplus\oplus}^{(-1)}\Psi_{\oplus\ominus}-\tilde{C}_{\oplus}\Psi_{\oplus\ominus}|\tilde{C}_{\ominus}^{-1}|A_{\ominus\ominus}
\end{equation*}
so that 
\begin{align*}
B_{\oplus\oplus} 
  &= -\big(A_{\oplus\oplus}
  -\tilde{C}_{\oplus}K_{\oplus\oplus}^{(-1)}\Psi_{\oplus\ominus}(-A_{\ominus\ominus}^{-1})A_{\ominus\oplus}
   +  \tilde{C}_{\oplus}\Psi_{\oplus\ominus}|\tilde{C}_{\ominus}^{-1}|A_{\ominus\oplus}\big)^{-1}
  \\
&  =  (I-\Psi_{\oplus\ominus}(-A_{\ominus\ominus}^{-1})A_{\ominus\oplus})^{-1}(-K_{\oplus\oplus}^{(-1)})^{-1}\tilde{C}_{\oplus}^{-1},
\end{align*}
using \eqref{defKpertub}. We write 
\begin{align*}
(I-\Psi_{\oplus\ominus}(-A_{\ominus\ominus}^{-1})A_{\ominus\oplus})^{-1} 
&  =I+\Psi_{\oplus\ominus}(I-(-A_{\ominus\ominus}^{-1})A_{\ominus\oplus}\Psi_{\oplus\ominus})^{-1}(-A_{\ominus\ominus}^{-1})A_{\ominus\oplus}\\
 & =  I+\Psi_{\oplus\ominus}(-U_{\ominus\ominus}^{(-1)}))^{-1}|\tilde{C}_{\ominus}^{-1}|A_{\ominus\oplus},
\end{align*}
so that $B_{\oplus\oplus}=D_{\oplus\oplus}$.

Next, we have 
\begin{align*}
B_{\ominus\oplus} 
 & =  (-A_{\ominus\ominus}^{-1})A_{\ominus\oplus}
 (I+A_{\ominus\oplus}\Psi_{\oplus\ominus}(-U_{\ominus\ominus}^{(-1)})^{-1}
 |\tilde{C}_{\ominus}^{-1}|)
 (-K_{\oplus\oplus}^{(-1)})^{-1}\tilde{C}_{\oplus}^{-1}\\
 & =  (-A_{\ominus\ominus}^{-1})
(-|\tilde{C}_{\ominus}|U_{\ominus\ominus}^{(-1)})+A_{\ominus\oplus}\Psi_{\oplus\ominus})
 (-U_{\ominus\ominus}^{(-1)})^{-1})
 |\tilde{C}_{\ominus}^{-1}|
 A_{\ominus\oplus}(-K_{\oplus\oplus}^{(-1)})^{-1}\tilde{C}_{\oplus}^{-1}.
\end{align*}
By \eqref{defUpertub}, 
$-|\tilde{C}_{\ominus}|U_{\ominus\ominus}^{(-1)}+A_{\ominus\oplus}\Psi_{\oplus\ominus}$
simplifies to $-A_{\om\om}$
so that 
$B_{\ominus\oplus} =  D_{\ominus\oplus} $.
%
%

Then, we have
 \begin{align*}
B_{\oplus\ominus} 
  = & (-K_{\oplus\oplus}^{(-1)})^{-1}\tilde{C}_{\oplus}^{-1}A_{\oplus\ominus}(A_{\ominus\ominus}^{-1})
 \\
 &  +\Psi_{\oplus\ominus}(-U_{\ominus\ominus}^{(-1)})^{-1}|\tilde{C}_{\ominus}^{-1}|A_{\ominus\oplus}(-K_{\oplus\oplus}^{(-1)})^{-1}\tilde{C}_{\oplus}^{-1}A_{\oplus\ominus}(A_{\ominus\ominus}^{-1}),
\end{align*}
and we use \eqref {RiccatieqGencaseperturb} to  replace $\tilde{C}_{\oplus}^{-1}A_{\oplus\ominus}$
 in the first term to write
\begin{align*}
B_{\oplus\ominus}
  = & (-K_{\oplus\oplus}^{(-1)})^{-1}
 (-\Psi_{\oplus\ominus}|\tilde{C}_{\ominus}^{-1}|A_{\om\om}-K_{\op\op}^{(-1)}\Psi_{\oplus\ominus})
 (-A_{\ominus\ominus}^{-1})
\\ 
 &+\Psi_{\oplus\ominus}(-U_{\ominus\ominus}^{(-1)})^{-1}|\tilde{C}_{\ominus}^{-1}|A_{\ominus\oplus}(-K_{\oplus\oplus}^{(-1)})^{-1}\tilde{C}_{\oplus}^{-1}A_{\oplus\ominus}(-A_{\ominus\ominus}^{-1})\\
  = & (-K_{\oplus\oplus}^{(-1)})^{-1}\Psi_{\oplus\ominus}|\tilde{C}_{\ominus}^{-1}|
 \\&+
 \Psi_{\oplus\ominus}
 \big(I+
 (-U_{\ominus\ominus}^{(-1)})^{-1}|\tilde{C}_{\ominus}^{-1}|A_{\ominus\oplus}(-K_{\oplus\oplus}^{(-1)})^{-1}\tilde{C}_{\oplus}^{-1}A_{\oplus\ominus}
 \big)
 (-A_{\ominus\ominus}^{-1}).
\end{align*}
We use \eqref{defUpertub}, to write the second term as
\begin{align*}
 & \Psi_{\oplus\ominus}(-U_{\ominus\ominus}^{(-1)})^{-1}
 |\tilde{C}_{\ominus}^{-1}|
 \big(A_{\ominus\oplus}(-K_{\oplus\oplus}^{(-1)})^{-1}\tilde{C}_{\oplus}^{-1}A_{\oplus\ominus}
 +(-A_{\ominus\ominus}-A_{\ominus\oplus}\Psi_{\oplus\ominus})\big)
 (-A_{\ominus\ominus}^{-1})
 \\
 &=\Psi_{\oplus\ominus}(-U_{\ominus\ominus}^{(-1)})^{-1}
 |\tilde{C}_{\ominus}^{-1}| 
  \big( I-A_{\ominus\oplus}(-K_{\oplus\oplus}^{(-1)})^{-1}
  (\tilde{C}_{\oplus}^{-1}A_{\oplus\ominus}-K_{\oplus\oplus}^{(-1)}\Psi_{\oplus\ominus})
  (-A_{\ominus\ominus}^{-1})
  \big)\\
   &=\Psi_{\oplus\ominus}(-U_{\ominus\ominus}^{(-1)})^{-1}
 |\tilde{C}_{\ominus}^{-1}| 
\big( I-A_{\ominus\oplus}(-K_{\oplus\oplus}^{(-1)})^{-1}\Psi_{\oplus\ominus}|\tilde{C}_{\ominus}|  \big),
\end{align*}
were we used \eqref{RiccatieqGencaseperturb} to replace $K_{\oplus\oplus}^{(-1)}\Psi_{\oplus\ominus}$.
We find thus $B_{\op\om}=D_{\op\om}$.

Finally, we use the definition of $U_{\om\om}^{(-1)}$ to write 
\begin{align*}
B_{\ominus\ominus}
 & =  -(A_{\ominus\ominus}+A_{\ominus\oplus}\Psi_{\oplus\ominus}-A_{\ominus\oplus}(-A_{\oplus\oplus}^{-1})\tilde{C}_{\oplus}\Psi_{\oplus\ominus}U_{\ominus\ominus}^{(-1)})^{-1}\\
 & =  (-U_{\ominus\ominus}^{(-1)})^{-1}|\tilde{C}_{\ominus}^{-1}|(I-A_{\ominus\oplus}(-A_{\oplus\oplus}^{-1})\tilde{C}_{\oplus}\Psi_{\oplus\ominus}|\tilde{C}_{\ominus}^{-1}|)^{-1}.
\end{align*}
We write
\begin{align*}
&(I-A_{\ominus\oplus}(-A_{\oplus\oplus}^{-1})\tilde{C}_{\oplus}\Psi_{\oplus\ominus}|\tilde{C}_{\ominus}^{-1}|)^{-1}
\\
&\qquad\qquad = 
I + A_{\om\op}
\big(I-(-A_{\oplus\oplus}^{-1})\tilde{C}_{\oplus}\Psi_{\oplus\ominus}|\tilde{C}_{\ominus}^{-1}|A_{\om\op}\big)^{-1}
(-A_{\oplus\oplus}^{-1})\tilde{C}_{\oplus}\Psi_{\oplus\ominus}|\tilde{C}_{\ominus}^{-1}|
\\
&\qquad\qquad = 
I + A_{\om\op}
\big(-A_{\oplus\oplus}-\tilde{C}_{\oplus}\Psi_{\oplus\ominus}|\tilde{C}_{\ominus}^{-1}|A_{\om\op}\big)^{-1}
\tilde{C}_{\oplus}\Psi_{\oplus\ominus}|\tilde{C}_{\ominus}^{-1}|
\\
&\qquad\qquad = 
I + A_{\om\op}(-K_{\op\op}^{(-1)})^{-1}\Psi_{\oplus\ominus}|\tilde{C}_{\ominus}^{-1}|
\end{align*}
by \eqref{defKpertub}, so that $B_{\om\om}=D_{\om\om}$.

We find the block $\Psi_{\op-}^{(1)}$ of $\Psi^{(1)}$ given in \eqref{eq:psiopmderivgen}  by observing the terms in $\ve^{0}$ in
\eqref{A_22}.
From \eqref{A_21}, 
 we obtain the Sylvester  equation \eqref{eq:psiopmderivgen} for  $\Psi_{\op\om}^{(1)}$.
 Taking the terms in $\ve$ in \eqref{A_11} and \eqref{A_21} leads respectively to
 \eqref{Psi1finalgen} and \eqref{eqforPsitwopom}.
\end{proof}

\begin{rem}\rm{
Not surprisingly, 
$\Psi_{+\om}=0$, as  we found in \eqref{RefPsi0VersMinus}.

As in Section \ref{AffectedCase+},  \eqref{Psiprmgen} is a function of $\Psi$ but also of the supplementary component $\Psi_{\op\om}$.
This generalizes $\Psi_{\op-}$ given in \eqref{Psioverline}.
There is  a probabilistic interpretation similar to the one given in \eqref{Psioverline}, with, here, a correction term due to the introduction  of $\mathcal{S}_{\ominus}$:
$[\Psi_{\oplus-}]_{ij}$ is the sum of
\begin{itemize}
\item
$[(-K_{\op\op}^{(-1)})^{-1} \tilde{C}^{-1}_{\op}A_{\op-}]_{ij} $,
 the probability that the phase process goes from $i$ to $j$, after some time spent in phases of 
$\mathcal{S}_{\oplus}$ or $\mathcal{S}_{\ominus}$,
\item
$[(-K_{\op\op}^{(-1)})^{-1}\tilde{C}^{-1}_{\op}A_{\op+}\Psi ]_{ij}  $,
the probability that the process leaves $i$ for a phase in $\mathcal{S}_{+}$ and later returns to the initial level in $j$,
\item
$[(-K_{\op\op}^{(-1)})^{-1}\Psi_{\op\om} |\tilde{C}_{\om}^{-1}|A_{\om-}]_{ij} $
the probability that the process  comes back to the initial level in a phase of  $\mathcal{S}_{\ominus}$ and goes to $j$,
\item
$[(-K_{\op\op}^{(-1)})^{-1}\Psi_{\op\om} |\tilde{C}_{\om}^{-1}|A_{\om+}\Psi]_{ij} $
 the process  comes back to the initial level in a phase of  $\mathcal{S}_{\ominus}$, goes to a phase of $\mathcal{S}_{+}$
and later returns to the initial level in $j$,
\end{itemize}
for $i \in \mathcal{S}_{\oplus}$,
$j\in \mathcal{S}_{-}$.
}
\end{rem}

\begin{rem}\rm{
Higher order terms (in particular, the coefficients of $\ve^2$)
may be of interest in some cases.
It is clear that the principal difficulty lies in the necessity to deal with calculations that are steadily more cumbersome,
but no more.
We expect that coefficients of $\Psi_{+\ominus}$ or $\Psi_{\op-}$ will be given explicitly and that each successive coefficients of $\Psi_{+-}$ and $\Psi_{\op\om}$ will be solutions of Sylvester equations.
}
\end{rem}

\section{Impact on the stationary probability}\label{application}
For $j\in\mathcal{S}$ and $x\in\mathbb{R}^{+}$,
we define the joint distribution function of the level and the phase at time $t$, 
$F_{j}(x,t)=\mathbb{P}\left[X\left(t\right)\leq x,\varphi\left(t\right)=j\right],$
and its density by
\begin{equation*}
f_{j}(x,t)=\frac{\partial}{\partial x} F_{j}(x,t),
\text{\quad with \quad}
f_{j}(0,t)=\lim_{x\rightarrow 0}f_{j}(x,t).
\end{equation*}
%
%
The stationary density vector 
$\vpi (x)=(\pi_j(x):j\in\mathcal{S})$ of the fluid model, where, for $j\in\mathcal{S}$,
$\pi_j(x)=\lim_{t\rightarrow \infty}f_{j}(x,t),$
exists if and only if the mean stationary drift is negative, that is, if and only if 
$\sum_{i\in\mathcal{S}}\xi_i c_i <0$, where $\xi_i$ is defined in \eqref{defdeXistatphase} for all $i$. 
When the mean stationary drift of the fluid model is negative,
from Govorun {\it et al.} \cite{govorun2013stability}, we have, for $x>0$,
\begin{equation}\label{ExprePifluidq}
\boldsymbol{\pi}\left(x\right)=
\vc q
e^{Kx}
\left[\begin{array}{cc}
C_{+}^{-1}
\; ; 
\Psi\left\vert C_{-}\right\vert ^{-1}
\; ;
\Theta
\end{array}\right],
\end{equation}
and the mass at zero is $[0 \; ;  \vc p_- \; ;  \vc p_0]$
where 
\begin{align}
K&=C_{+}^{-1}Q_{++}+\Psi\left\vert C_{-}\right\vert ^{-1}Q_{-+},
\label{DefofKmatrix}
\\
\Theta &= 
\left(C_{+}^{-1}A_{+0}+\Psi\left\vert C_{-}\right\vert ^{-1}A_{-0}\right)\left(A_{00}\right)^{-1},
\label{DefofThmatrix}
\\
\vc q &= \vc p_- A_{-+} + \vc p_{0}A_{0+}
\label{Defofqvector}
\end{align}
and  $[\boldsymbol{p}_{-}\; ; \vc p_0]$
is the unique solution of the system
\begin{align}
\begin{bmatrix}
\vc p_- \; ; \vc p_0
\end{bmatrix}
\begin{bmatrix}
A_{--} + A_{-+}  \Psi & A_{-0}
\\
A_{0-}+A_{0+}\Psi & A_{00} 
\end{bmatrix}
&=
\vc 0
\label{solutionp0pm}
\\
[\boldsymbol{p}_{-} \; ; \vc p_0]\vone
+\boldsymbol{q}_{-}(-K)^{-1}
(C_{+}^{-1} + \Psi\left\vert C_{-}\right\vert ^{-1}+\Theta )\vc 1
&=1.
\label{normilzep}
\end{align}
 Expression  \eqref{ExprePifluidq} is numerically stable and has a physical interpretation (da Silva Soares [38, Chapter 1, Section 1.3]).
Furthermore, it appears clearly that all the quantities appearing in the expression of the stationary density are functions of~$\Psi$. 

 The stationary density of \eqref{TransMatrPertFluid} may be formulated as
\begin{equation}\label{piespil}
\boldsymbol{\pi}\left(x,\varepsilon\right)=
\vc q(\ve)
e^{K(\ve )x}
\left[\begin{array}{cc}
C_{+}^{-1}
\; ; 
\Psi(\ve) \left\vert C_{-}\right\vert ^{-1}
\; ;
\Theta(\ve)
\end{array}\right],
\end{equation} 
 where $K(\ve)$, $\Theta(\ve)$ and $\vc q(\ve)$ are defined similary to 
 \eqref{DefofKmatrix},\eqref{DefofThmatrix} and \eqref{Defofqvector} respectively.
It is well known that  the stationary density vector $\boldsymbol{\pi}(x, \varepsilon)$ is differentiable (see Kato \cite[Section 2]{kato2013perturbation}) 
and such that $\boldsymbol{\pi}(x, \varepsilon)$ may be  written as 
\begin{equation*}
\boldsymbol{\pi}(x,\varepsilon)=
 \boldsymbol{\pi}(x)+ \varepsilon \boldsymbol{\pi}^{(1)}(x,0) + O(\varepsilon^2),
\end{equation*}
where 
\begin{equation} \label{pi1pourTPerturb}
\boldsymbol{\pi}^{(1)}(x,0)=
\lim_{\varepsilon \rightarrow 0}\frac{\boldsymbol{\pi}(x,\varepsilon) - \boldsymbol{\pi}(x,0)}{\varepsilon},
\end{equation}
for all $x \in \mathbb{R}^{+}$. 
We find 
\begin{align*}
\boldsymbol{\pi}^{(1)}(x,0)
&=
\boldsymbol{q} e^{Kx}
\left[\begin{array}{ccc}
0
 \;  ; \;
\Psi^{(1)}\left\vert C_{-}^{-1}\right\vert
\;  ; \;
\Theta^{(1)}
\end{array}\right]
\nonumber
\\
& \hspace{-0.7cm}+ 
(\boldsymbol{q}^{(1)} e^{Kx}
+  \boldsymbol{q} L^{(1)}(x))
\left[\begin{array}{cc}
C_{+}^{-1}
\; ; 
\Psi\left\vert C_{-}\right\vert ^{-1}
\; ;
\Theta
\end{array}\right],
\end{align*}
where
$\Psi^{(1)}$ is given in Theorem \ref{ThmforAPerturbed}
and 
\begin{align*}
\Theta^{(1)}
&=
(C_+^{-1}A_{+0}
+ \Psi |C_-^{-1}|A_{-0})
(-A_{00}^{-1})\tilde{A_{00}}A_{00}^{-1}
\\
&\quad +
C_{+}^{-1}\tilde{A}_{+0}
+ \Psi^{(1)} |C_-^{-1}|A_{-0}
+\Psi |C_-^{-1}| \tilde{A}_{-0}.
\end{align*}
The vector $\vc q (\ve)$ is differentiable by Kato \cite[Section 2]{kato2013perturbation} and
\begin{equation*}
\boldsymbol{q}^{(1)}
= \vc p_-^{(1)} A_{-+} + \vc p_0^{(1)} A_{0+} 
+\vc p_{-}\tilde{A}_{-+} + \vc p_{0}^{(1)}\tilde{A}_{0+}
\end{equation*} 
with
\begin{align}\label{ppp0}
\begin{bmatrix}
\vc p_- ^{(1)}\; ; \vc p_0^{(1)}
\end{bmatrix}
&=
-
\begin{bmatrix}
\vc p_- \; ; \vc p_0
\end{bmatrix}
\begin{bmatrix}
\tilde{A}_{--} + \tilde{A}_{-+}  \Psi + A_{-+}\Psi^{(1)} & \tilde{A}_{-0}
\\
\tilde{A}_{0-}+\tilde{A}_{0+}\Psi +  A_{0+}\Psi^{(1)} & \tilde{A}_{00} 
\end{bmatrix}
\nonumber
\\
&
\begin{bmatrix}
A_{--} + A_{-+}  \Psi & A_{-0}
\\
A_{0-}+A_{0+}\Psi & A_{00} 
\end{bmatrix}^{\#}
+c \boldsymbol{\pi}(x),
\end{align}
where $M^{\#}$ denotes the group inverse of the matrix $M$.
We  find \eqref{ppp0} by solving the Poisson equation (see Meyer \cite{meyer1975role}) satsified by 
$[
\vc p_- ^{(1)}\; ; \vc p_0^{(1)}
]$,
deduced from \eqref{solutionp0pm},
where $c$ is a normalisation found through 
\eqref{normilzep}.
Finally,
\begin{equation*}
L^{(1)}(x)
=
\int_{0}^{x} e^{K(x-s)} K^{(1)} e^{Ks} \mathrm{d}s, 
\end{equation*}
where 
$K^{(1)}=C_{+}^{-1} \tilde{Q}_{++}
+\Psi^{(1)} |C_{-}^{-1}| Q_{-+}
+ \Psi |C_-^{-1}| \tilde{Q}_{-+}$.
In order to actually compute $L^{(1)}(x)$,
we refer the reader to Higham \cite[Theorem 10.13, Equation (10.17a)]{higham2008functions}.

\section{Numerical Illustration}\label{Illustr}

We evaluate and display the value of
$E_\infty(\varepsilon) = \parallel \boldsymbol{\Psi}(\varepsilon) - \overline\Psi -
\varepsilon \Psi^{(1)} \parallel_\infty$
for a few examples where only the phases in $\mathcal{S}_{0}$ are
perturbed, either by a positive quantity as in
Section~\ref{AffectedCase+}, or by a negative quantity, as in
Section~\ref{Casemigrto-}.  The narrative is as follows: assume that
the rates $c_i$ in $\mathcal{S}_{0}$ are very small and positive (or
very small and negative) and that they are set equal to 0.  What is
the effect on the matrix $\Psi$?

The controlling phase evolves as a birth-and-death process on the
state space $\{1 \ldots 3m\}$ where
\begin{itemize}
\item phases 1 to $m$ belong to $\mathcal{S}_{+}$ and all have the
  same positive rate $r_+$;
\item phases $m+1$ to $2m$ belong to $\mathcal{S}_{0}$; when
  perturbed, they all have the same perturbation coefficient $\tilde
  r_0$, either positive or negative;
\item phases $2m+1$ to $3m$ belong to $\mathcal{S}_{-}$ and all have
  the same negative rate $r_-$.
\end{itemize}
The parameters $r_+$ and $r_-$
are chosen in all cases such that the stationary drift of the non-perturbed fluid
model is equal to $-0.1$.

\subsection*{Migration of $\mathcal{S}_{0}$ to $\mathcal{S}_{+}$.}

\paragraph{Case 1.a} The infinitesimal generator is that of the M/M/1/N
queue with $N=3m$, that is,
\begin{equation}
   \label{e:Acase1}
A=\left[\begin{array}{cccccc}
-\lambda & \lambda & 0\\
\mu & -(\lambda+\mu) & \lambda\\
0 & \mu & -(\lambda+\mu) & \ddots\\
 &  & \ddots & \ddots\\
 &  &  &  & -(\lambda+\mu) & \lambda\\
 &  &  &  & \mu & -\mu
\end{array}\right].
\end{equation}
We assume that $\lambda > \mu$, so that the process
spends most of its time in $\mathcal S_-$ in this case.
In our experimentation, we have noticed that the quantities
\[
E_+(\varepsilon) = \max_{i \in \mathcal S_+} \sum_{j \in \mathcal S_-}
|\boldsymbol{\Psi}(\varepsilon) - \overline\Psi -
\varepsilon \Psi^{(1)} |_{ij}
\]
and 
\[
E_\oplus(\varepsilon) = \max_{i \in \mathcal S_0} \sum_{j \in \mathcal S_-}
|\boldsymbol{\Psi}(\varepsilon) - \overline\Psi -
\varepsilon \Psi^{(1)} |_{ij}
\]
are significantly different sometimes, and for that reason we give
separately their values in the figures of this section, the norm
$E_\infty$ is easily found as the maximum of $E_+$ and $E_\oplus$.

\begin{figure}
\centering
\includegraphics[scale=0.4]{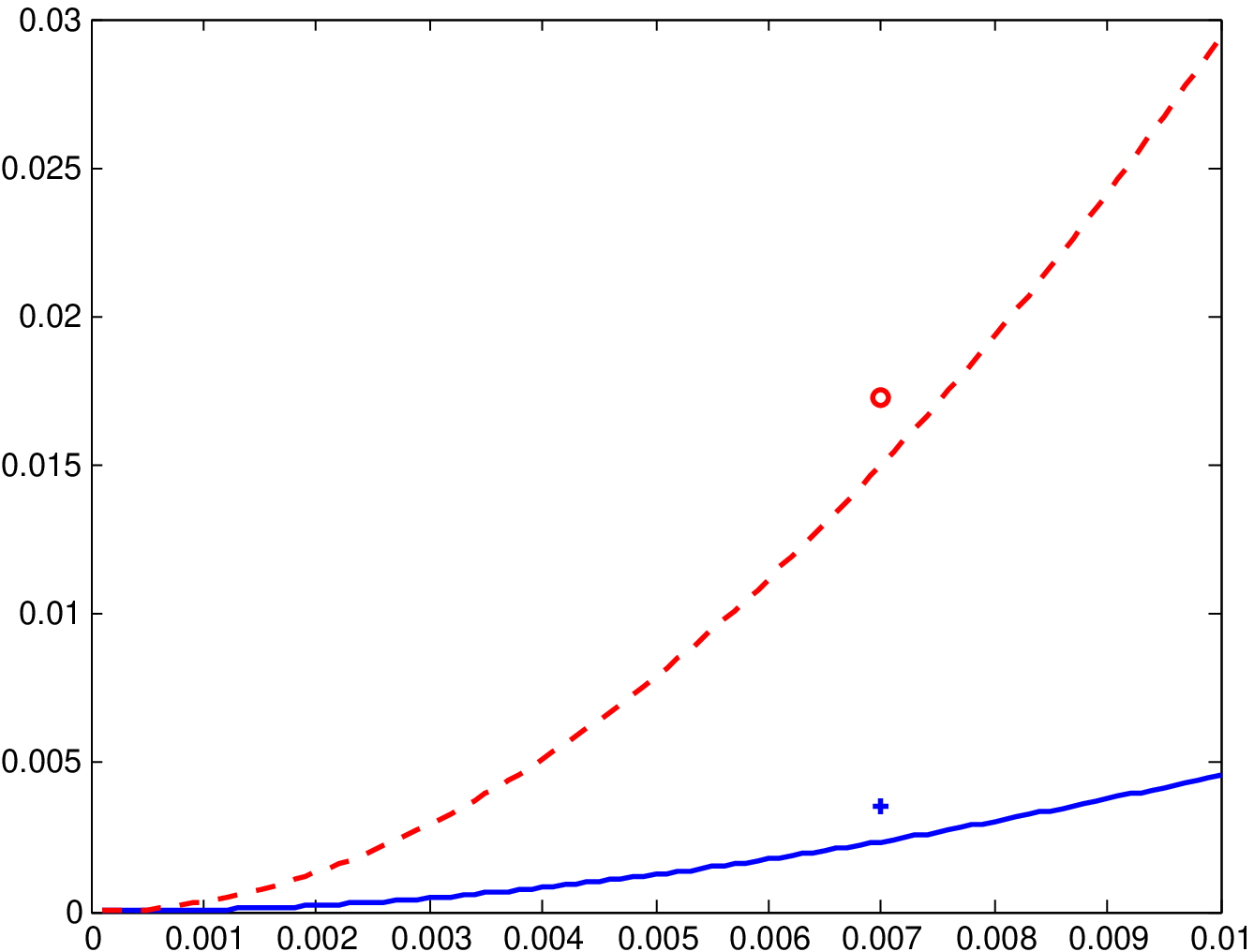}
\qquad \includegraphics[scale=0.4]{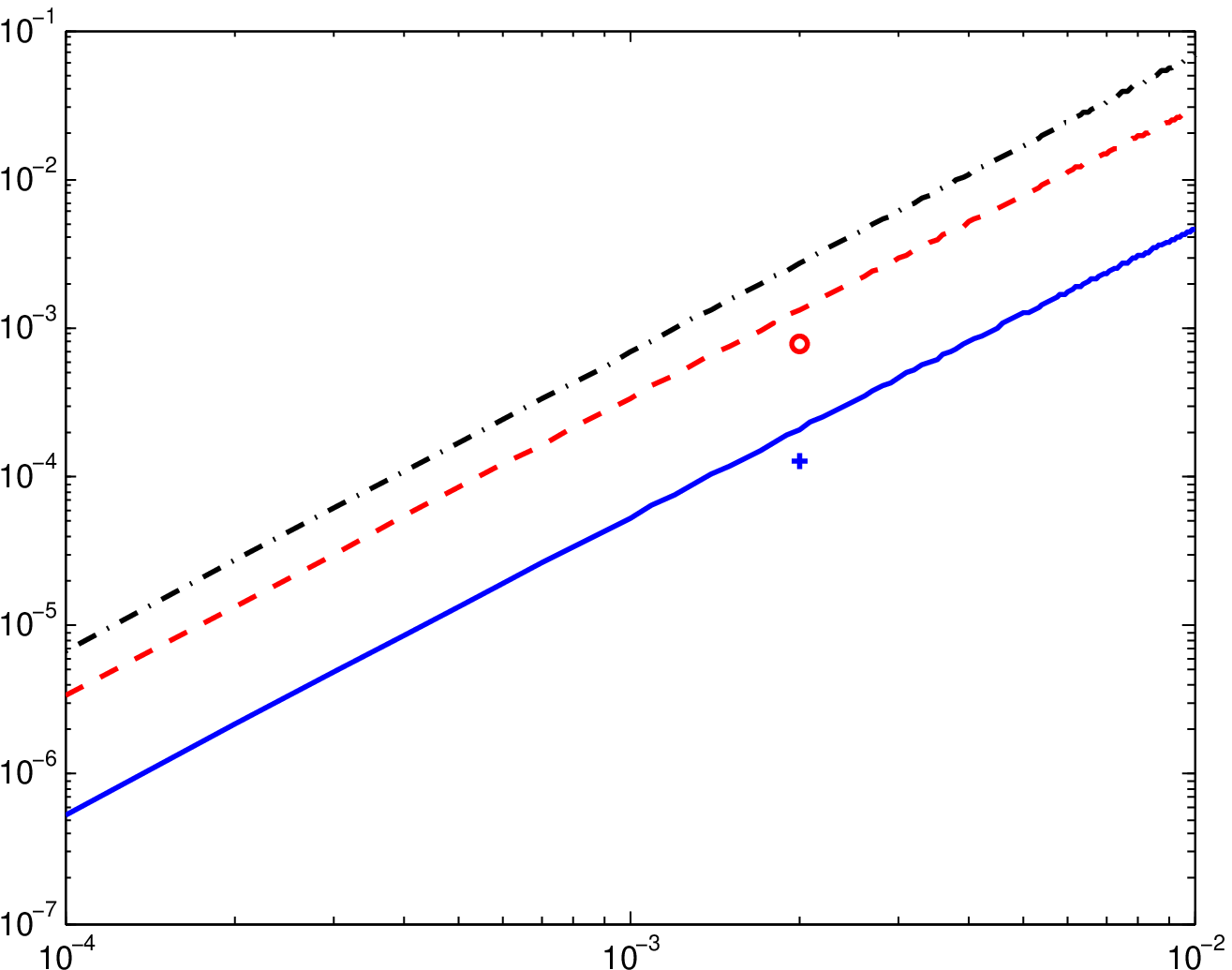}

\caption{$E_+(\varepsilon)$ and $E_\oplus(\varepsilon)$
  for Case 1.a,  $\varepsilon = 10^{-4}$ to $10^{-2}$; $m=5$,
$\lambda=2$, $\mu=1$, $r_{+}=0.4$, $r_{-}=-0.207$, the perturbation is
$\tilde r_{\oplus}=0.4$.   
   \label{f:case1A}
}
\end{figure}

The results on Figure~\ref{f:case1A} are obtained from
$\lambda =2 > \mu =1 $, $m=5$, $r_+ = 0.4$, $r_- = -0.207$,
$\tilde r_\oplus = r_+ >0$.  On the left, $E_+(\varepsilon)$ is
displayed as a continuous line, marked with a '+' sign,
$E_\oplus (\varepsilon)$ is displayed as a dashed line, marked with an
'o' sign.  On the right of Figure~\ref{f:case1A}, we display the same
functions on a logarithmic scale; in addition, we include for visual
reference a function proportional to $\varepsilon^2$, as the lined
marked with alternating dashes and dots.

The logarithm  plot shows in a striking manner that the difference
$\boldsymbol\Psi(\varepsilon) 
- \overline\Psi - \varepsilon \Psi^{(1)} $ is $O(\varepsilon^2)$.
This may also be seen in  Table~\ref{t:cases1to3}, where we give the
values of $E_+$ and $E_\oplus$ for $\varepsilon = 10^{-4}$ and
$\varepsilon = 10^{-2}$, for Cases 1.a, 2.a and 3.a.

\begin{table}
\centering  
\begin{tabular}{c|cc|cc}
  & \multicolumn{2}{c|}{$\varepsilon = 10^{-4}$} &
             \multicolumn{2}{c}{$\varepsilon = 10^{-2}$} \\
Case   & $E_+$ & $E_\oplus$ & $E_+$ & $E_\oplus$ \\
\hline
1.a &  5.37 $10^{-7}$  &  3.39 $10^{-6}$  &  4.60 $10^{-3}$  &  2.94 $10^{-3}$ \\
2.a & 1.92 $10^{-12}$  & 2.00 $10^{-12}$  &  2.08 $10^{-8}$  & 2.15 $10^{-8}$ \\
3.a & 3.77 $10^{-8}$   & 4.80 $10^{-8}$   &  3.66 $10^{-4}$  &  4.67 $10^{-4}$ \\
\end{tabular}
\caption{Values of $E_+(\varepsilon) $ and $E_{\oplus}(\varepsilon) $ in Cases 1.a to 3.a, for
  $\varepsilon$ equal to $10^{-4}$ and~$10^{-2}$.
   \label{t:cases1to3}
}
\end{table}

\paragraph{Case 2.a} The infinitesimal generator for the phase is given
by (\ref{e:Acase1}), the
same as in Case 1.a, but here we take $\lambda<\mu$, so that the process
spends most of its time in $\mathcal S_+$.  The parameters in this
case are $\lambda =1 < \mu =2$,
$m=5$, $r_+ = 0.4$, $r_- = -621$, $\tilde r_\oplus = r_+ >0$.   Notice that
we must use a very small value for $r_-$, to compensate for the time
spent in $\mathcal S_+$  and keep $-0.1$  as the
stationary drift.

The results for this case are displayed on the left of
Figure~\ref{f:case2n3} and it appears that $E_+$ and $E_\oplus$ are
very close to each other.  Furthermore, they are much smaller that in
Case 1.a.  This is very clear from Table~\ref{t:cases1to3}, where we
observe that $E_\infty$ is
several orders of magnitude smaller in Case 2.a than in Case~1.a.

\begin{figure}
\centering
\includegraphics[scale=0.4]{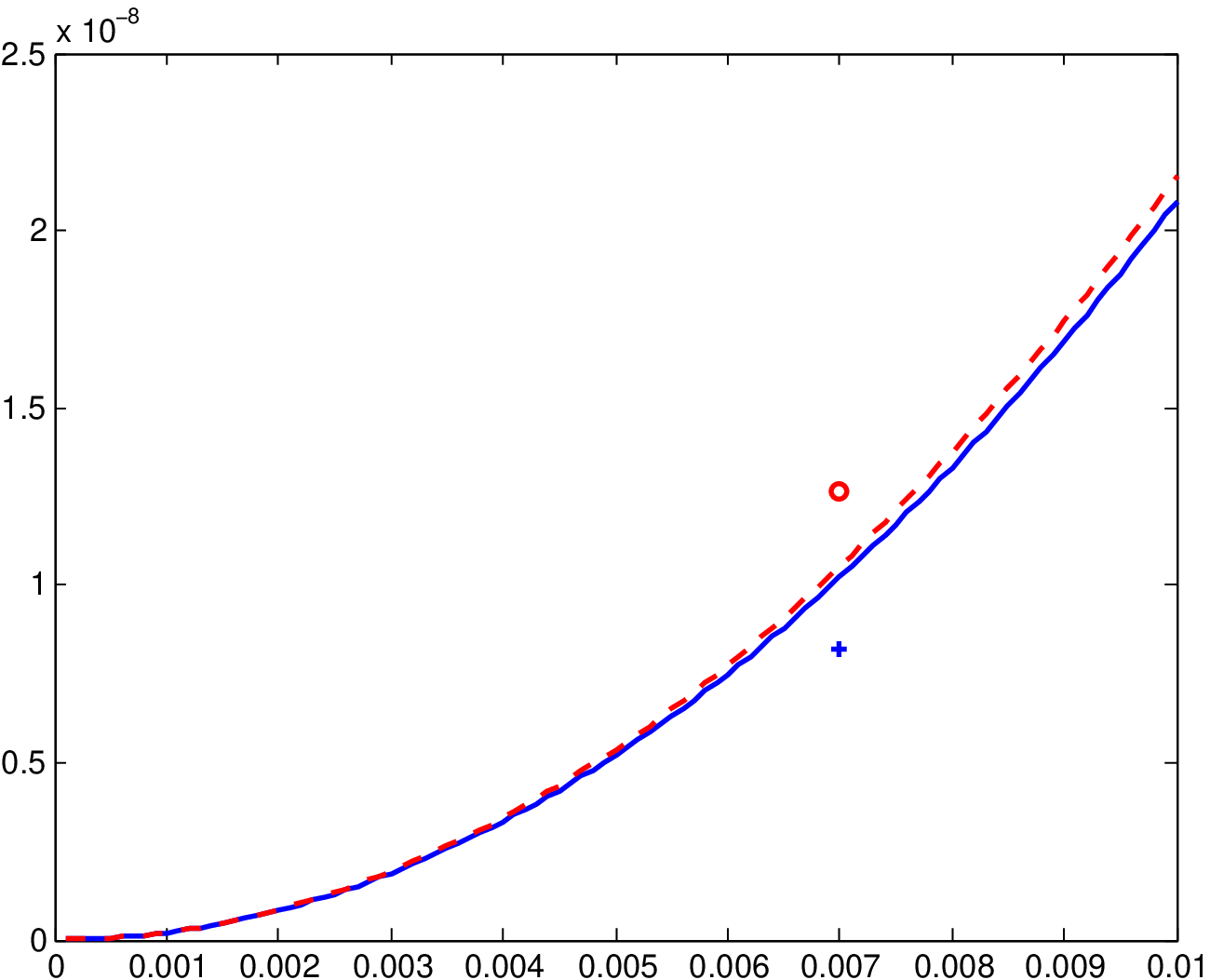}
\qquad
\includegraphics[scale=0.4]{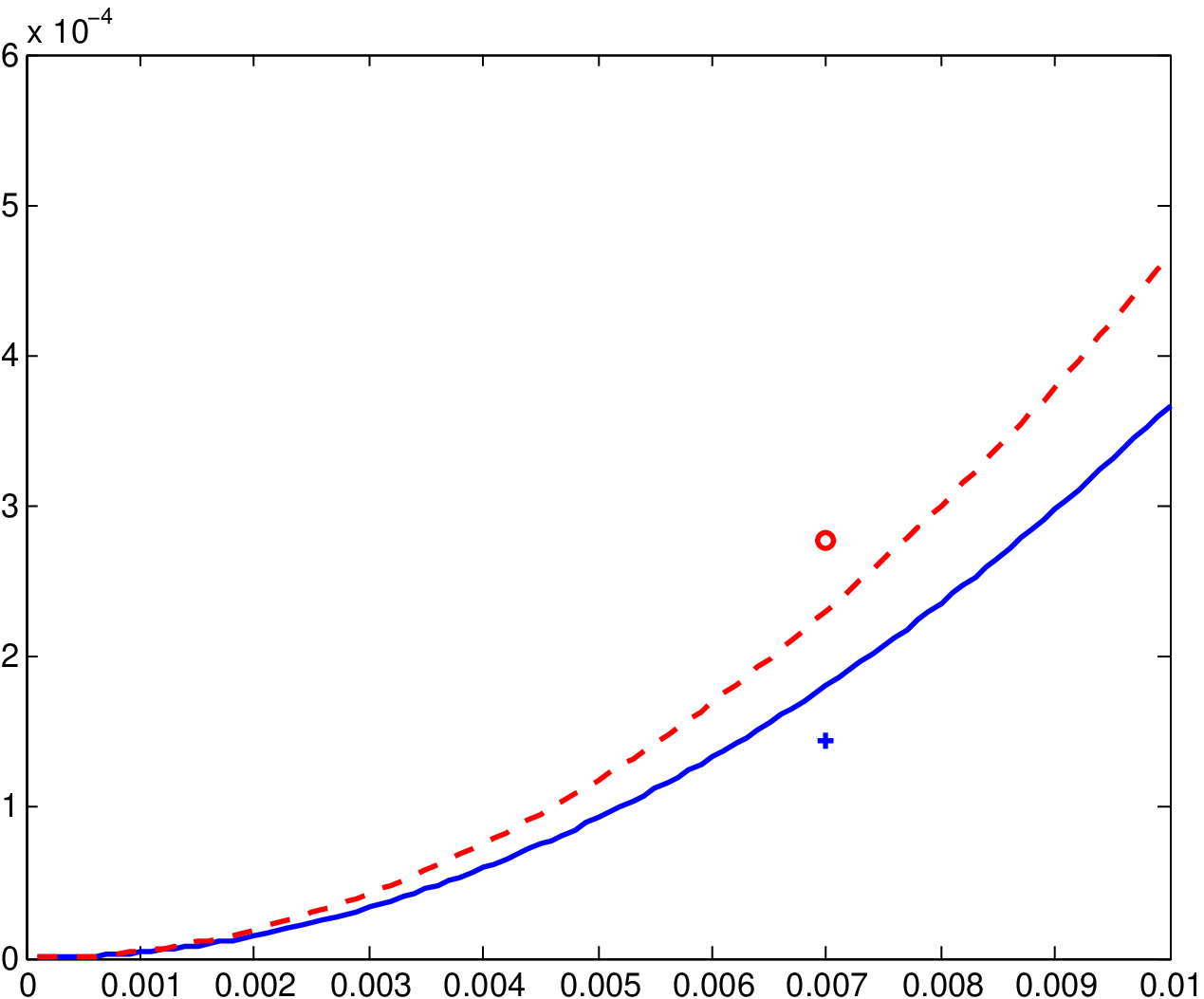}
\caption{Case 2.a is displayed on the left, with parameters $m=5$,
$\lambda=1$, $\mu=2$, $r_{+}=\tilde r_{\oplus}=0.4$, $r_{-}=-621$.
Case 3.a is displayed on the right, with parameters $m=5$, $\alpha=\beta=1$,
$r_{+}=\tilde r_{\oplus}=0.4$, $r_{-}=-2.63.$
   \label{f:case2n3}
}
\end{figure}

\paragraph{Case 3.a}  The infinitesimal generator in Case 3.a is that of a
system of $N$ individuals independently alternating between two
states.  It is given by
\begin{equation*}
A=\left[\begin{array}{cccccc}
-d_{0,1} & (N-1)\alpha & 0\\
\beta & -d_{1,2} & (N-2)\alpha\\
0 & 2\beta & -d_{2,3} & \ddots\\
 &  & \ddots & \ddots\\
\\
 &  &  &  & -d_{N-2,N-1} & \alpha\\
 &  &  &  & (N-1)\beta & -d_{N-1,N}
\end{array}\right],
\end{equation*}
where $d_{i,j}=i\beta+(N-j)\alpha$ and $N=3m$.  We take $\alpha =
\beta$ so that the distribution is concentrated in the middle of the
range $[1\ldots N]$, that is, in the region covered by $\mathcal S_0$.

The results are given on the right in Figure~\ref{f:case2n3} and in
the last row of Table~\ref{t:cases1to3}, the parameters are $m=5$,
$\alpha = \beta =1$, $r_+=0.4$, $r_-=-2.63$ and $\tilde r_\oplus=r_+$.

\subsection*{Migration of $\mathcal{S}_{0}$ to $\mathcal{S}_{-}$}

In the second set of examples, labeled Cases 1.b to 3.b, we take the same
parameters as in Cases 1.a to 3.a, except that the perturbation for the
phases in $\mathcal S_0$ are negative, and we take in each case
$\tilde r_\ominus = -r_+$.  Here, there is only one set of functions
$E(\cdot)$ to compute: the rows of $\Psi$ are all labeled by phases
in $\mathcal S_+$ and so $E_\infty=E_+$, while $E_\oplus$ is not defined.

Graphically, we have observed results very similar to those in
Figures~\ref{f:case1A} and~\ref{f:case2n3} and we do not give the
graphs here.  Instead, we give in Table~\ref{t:setB} the values of
$E_\infty(\varepsilon)$ for $\varepsilon = 10^{-4}$ and $10^{-2}$.
The obtained values are similar to those obtained for Cases 1.a to 3.a
and it appears clearly that $E_\infty(\varepsilon)$ is
$O(\varepsilon^2)$.

We might also compute the 1-norm of $\boldsymbol{\Psi}(\varepsilon) - \overline\Psi
-\varepsilon \Psi^{(1)}$  instead of its $\infty$-norm,  and compare
the two partial norms
\[
E_-(\varepsilon) = \max_{j \in \mathcal S_-} \sum_{j \in \mathcal S_+}
| \boldsymbol{\Psi} (\varepsilon) - \overline\Psi -
\varepsilon \Psi^{(1)} |_{ij}
\]
and 
\[
E_\ominus(\varepsilon) = \max_{j \in \mathcal S_0} \sum_{i \in \mathcal S_+}
| \boldsymbol{\Psi} (\varepsilon) - \overline\Psi -
\varepsilon \Psi^{(1)} |_{ij},
\]
with $\parallel \boldsymbol{\Psi}(\varepsilon) - \overline\Psi -\varepsilon
\Psi^{(1)} \parallel_1$ = $\max(E_-(\varepsilon), E_\ominus(\varepsilon))$.
We would expect to observe differences similar to those between $E_+$
and $E_\oplus$ in Cases 1.a to 3.a.


\begin{table}
\centering  
\begin{tabular}{c|cc}
Case    & $\varepsilon = 10^{-4}$ &
             $\varepsilon = 10^{-2}$ \\
\hline
1.b &  1.08 $10^{-7}$  &  1.05 $10^{-3}$  \\
2.b & 5.11 $10^{-8}$  & 4.91 $10^{-4}$ \\
3.b & 1.33 $10^{-6}$   & 1.15 $10^{-2}$   \\
\end{tabular}
\caption{Values of $E_\infty(\varepsilon) $  in Cases 1.b to 3.b, for
  $\varepsilon$ equal to $10^{-4}$ and $10^{-2}$.
   \label{t:setB}
}
\end{table}

\section*{Acknowledgement}
This work was supported in part by 
the Minist\`{e}re de la Communaut\'{e} fran\c caise de Belgique through the ARC grant
AUWB-08/13-ULB 5 and in part by the Flemish Community of Belgium
through the Methusalem program.

%
%
%
\end{document}